\newtheorem*{example}{\bf Example}
\newtheorem*{remark}{\bf Remark}
\newtheorem{theorem}{\bf Theorem}[section]
\newtheorem{proposition}[theorem]{\bf Proposition}
\newtheorem{definition}[theorem]{\bf Definition}
\newtheorem{Theorem}{\bf Theorem}
\newtheorem*{claim}{\bf Claim}
\newtheorem{lemma}[theorem]{\bf Lemma}
\def\C{{\mathbb C}}
\def\R{{\mathbb R}}
\def\p{\mathbb{P}}
\def\dd{\mathrm{d}}
\title[Quantitative equidistribution]{Quantitative equidistribution of periodic points for rational maps}
\author{Thomas Gauthier}
\address{Laboratoire de Math\'ematiques d'Orsay, B\^atiment 307, Universit\'e Paris-Saclay, 91405 Orsay Cedex, France}
\email{thomas.gauthier@universite-paris-saclay.fr}
\author{Gabriel Vigny}
\address{LAMFA, Universit\'e de Picardie Jules Verne, 33 rue Saint-Leu, 80039 AMIENS Cedex 1, FRANCE}
\email{gabriel.vigny@u-picardie.fr}
\thanks{The first author is partially supported by the Institut Universitaire de France.}
\thanks{Both authors are partially supported by the French National Research Agency under the project DynAtrois, project ANR-24-CE40-1163.}
\begin{document}
\begin{abstract}
We show that periodic points of period $n$ of a complex rational map of degree $d$ equidistribute towards the equilibrium measure $\mu_f$ of the rational map with a rate of convergence of $(nd^{-n})^{1/2}$ for $\mathscr{C}^1$-observables. This is a consequence of a quantitative equidistribution of Galois invariant finite subsets of preperiodic points \`a la Favre and Rivera-Letelier.
Our proof relies on the H\"older regularity of the quasi-psh Green function of a rational map, an estimate of Baker concerning Hsia kernel, as well as on the product formula and its generalization by Moriwaki for finitely generated fields over $\mathbb{Q}$.
\end{abstract}

\maketitle


\section*{Introduction}
For a rational map $f:\p^1(\C)\to\p^1(\C)$ of degree $d\geq2$, Lyubich~\cite{lyubich:equi} defined a natural probability measure $\mu_f$ capturing the chaotic dynamics, called the \emph{equilibrium measure} of $f$. He proved that the measure $\mu_f$ is the unique measure of maximal entropy of $f$ and that it equidistributes periodic points. To make this statement more precise, for $n\geq1$, we denote by $\mathrm{Per}_n(f)$ the set of points for which $n$ is the smallest positive  integer with $f^{\circ n}(z)=z$. His result reads as
\[\frac{1}{d^n}\sum_{z\in \mathrm{Per}_n(f)}\delta_z\longrightarrow\mu_f,\]
as $n\to\infty$, where the convergence holds in the weak sense of measures on $\p^1(\C)$.

\medskip

 For  $0<\gamma\leq 1$, let $\mathscr{C}^\gamma(\mathbb{P}^1(\C),\R)$ denote the space of $\gamma$-Hölder observables, endowed with the usual norm $\|\cdot\|_{\mathscr{C}^\gamma}$. We write $\mathrm{Lip}(\mathbb{P}^1(\C),\R)$ for Lipschitz observables and $\mathrm{Lip}(\cdot)$ for the Lipschitz constant. 
Our aim here is to quantify the above convergence and give an exponential rate of convergence when tested against observables in $\mathscr{C}^\gamma(\mathbb{P}^1(\C),\R)$.
\begin{Theorem}\label{Main-theorem}
Let $f:\mathbb{P}^1(\C)\to\mathbb{P}^1(\C)$ be any complex degree $d\geq2$ rational map. Then for any $0<\gamma\leq 1$, there is a constant $C_{f,\gamma}>0$ depending only on $f$ and $\gamma$ such that
\[\left|\int_{\mathbb{P}^1(\C)}\varphi\, \dd\mu_f-\frac{1}{d^n}\sum_{z\in \mathrm{Per}_n(f)}\varphi(z)\right|\leq C_{f,\gamma}\cdot \left(\frac{n}{d^n}\right)^{\gamma/2}\cdot\|\varphi\|_{\mathscr{C}^\gamma},\]
for any $n\geq1$ and any $\varphi\in\mathscr{C}^\gamma(\mathbb{P}^1(\C),\R)$.
\end{Theorem}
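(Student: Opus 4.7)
The strategy is arithmetic, following the Favre--Rivera-Letelier paradigm but keeping track of all constants. Since $f$ has only finitely many coefficients, there is a finitely generated field $\mathbb{K}\subset\mathbb{C}$ over which $f$ is defined; by Moriwaki's product formula, $\mathbb{K}$ carries an adelic structure with a distinguished archimedean place $v_0$ corresponding to $\mathbb{K}\hookrightarrow\mathbb{C}$, satisfying $\sum_v n_v \log|x|_v=0$ for $x\in\mathbb{K}^\ast$. At each place $v$, $f$ acts on the Berkovich projective line $\mathbb{P}^{1,\mathrm{an}}_v$ with equilibrium measure $\mu_{f,v}$ and Hsia kernel $\delta(z,w)_v$. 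The first step is to prove a quantitative equidistribution estimate for arbitrary Galois-invariant finite subsets $F\subset\mathbb{P}^1(\overline{\mathbb{K}})$ of preperiodic points (for which the canonical $f$-height vanishes) and then specialize to $F=\mathrm{Per}_n(f)$.

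For such $F$, set $\nu_F=\frac{1}{|F|}\sum_{z\in F}\delta_z$ and introduce at each place $v$ the mutual energy
\[
E_v(\nu_F-\mu_{f,v}) \;=\; -\iint \log\delta(z,w)_v\, d(\nu_F-\mu_{f,v})(z)\, d(\nu_F-\mu_{f,v})(w),
\]
with the diagonal suitably regularized. Applying the product formula to the monic polynomial whose roots are $F$, together with the vanishing of the canonical height on $F$, yields an identity of the form
\[
E_{v_0}(\nu_F-\mu_{f,v_0}) \;=\; -\sum_{v\neq v_0} n_v\, E_v(\nu_F-\mu_{f,v}),
\]
which is the arithmetic heart of the argument. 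Baker's estimate for $\log\delta(z,w)_v$ then bounds each non-archimedean term by a quantity depending only on the leading coefficient and resultant of the defining polynomial of $F$ at $v$, so that quantitative control on the archimedean energy reduces to a precise count of the non-archimedean contributions from $F$.

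Specializing to $F=\mathrm{Per}_n(f)$, whose defining polynomial has degree of order $d^n$, the dynamical identity $f^{\circ n}(z)=z$ forces the non-archimedean sum to grow linearly in $n$, producing after normalization the bound $E_{v_0}(\nu_n-\mu_{f,v_0})\leq C_f\cdot n/d^n$. It remains to transfer this energy bound to the claimed $\mathscr{C}^\gamma$-estimate. This is where the H\"older regularity of the complex Green function $g_{f,v_0}$ enters: combined with a standard Cauchy--Schwarz argument on the Dirichlet form, it yields
\[
\Bigl|\int \varphi\, d(\nu_n-\mu_f)\Bigr| \;\leq\; C\cdot \mathrm{Lip}(\varphi)\cdot (n/d^n)^{1/2}
\]
for Lipschitz $\varphi$, and real interpolation with the trivial $L^\infty$ estimate promotes this to $\|\varphi\|_{\mathscr{C}^\gamma}\cdot(n/d^n)^{\gamma/2}$.

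The principal obstacle is the non-archimedean bookkeeping in the arithmetic identity: one must regularize the diagonal singularity of $\log\delta(z,w)_v$ coherently across all $v$ and, using Baker's estimate together with the self-similarity $f^{\circ n}(z)=z$, extract a linear-in-$n$ (rather than $n^2$ or worse) dependence from the non-archimedean contributions. Once this is in place, the Moriwaki product formula converts the non-archimedean estimate into the archimedean one essentially for free, and the passage from energy to H\"older norm is a classical though somewhat delicate application of potential theory on $\mathbb{P}^1(\mathbb{C})$.
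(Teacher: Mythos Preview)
Your proposal is correct and follows essentially the same route as the paper: work over the finitely generated field of definition, use Moriwaki's generalized product formula together with Baker's Hsia-kernel estimate to bound the off-diagonal archimedean energy $-\frac{1}{N^2}\sum_{i\neq j}\Phi_{g_f}(z_i,z_j)$ by $O(\log N/N)=O(n/d^n)$, then convert this to a Lipschitz estimate via Cauchy--Schwarz and the H\"older regularity of the Green function, and finally interpolate to $\mathscr{C}^\gamma$. The only imprecision worth flagging is that Baker's bound depends on the coefficients of the good lift of $f$ (yielding $C_d(\epsilon_v+\log^+\max_i|a_i|_v)\,N\log N$), not on the defining polynomial of the set $F$; the paper packages the archimedean step as a general ``quasi-$g$-Fekete'' equidistribution theorem, but this is only a cosmetic difference from your energy formulation.
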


The optimal rate of convergence would be of the order $d^{-n/2}$ when testing against $\mathscr{C}^1$ observables, so the rate we obtain is almost optimal. Moreover, it is the best convergence rate one can reach with our approach.
This result was already known for rational maps defined over a number field by Favre and Rivera-Letelier~\cite{FRL} (see also the work of Okuyama~\cite{Okuyama-Fekete} for a quantitative result and \cite{bilu,Baker-Rumely,thuil2005} for qualitative results of arithmetic nature). Recently, De Th\'elin, Dinh and Kaufmann~\cite{DeThelin-Dinh-Kaufmann} gave a non explicit exponential rate of convergence but for any endomorphism of a projective space of any dimension. Their proof relies on a pluripotential theoretic approach, whereas our proof relies on properties of configurations of points which behave similarly to Fekete configurations and on arithmetic properties of periodic points of a rational map. Both proofs are very different in nature.

\bigskip

Theorem~\ref{Main-theorem} actually follows from a more general result we describe now. For a rational map $f:\p^1\to\p^1$, let $F:\C^2\to\C^2$ be the only lift of $f$ such that $\mathrm{Res}(F)=1$. Then $f$ is defined over the field $K$ which is generated over $\mathbb{Q}$ by the coefficients of $F$. This is a finitely generated field over $\mathbb{Q}$ and preperiodic points of $f$ are defined in finite extensions of $K$. We thus can look at the action of the Galois group $\mathrm{Gal}(\bar{K}/K)$ on such points. 

Our second result is the following.

\begin{Theorem}\label{tm:Galois-quantitative}
Let $f$ be a degree $d\geq2$ rational map and let $K$ be the associated field as above. There is a constant $C>0$ depending only on $f$ such that for any $\varphi\in \mathrm{Lip}(\mathbb{P}^1(\C),\R)$ and any finite set $E\subset \p^1(\C)$ of preperiodic points of $f$ which is $\mathrm{Gal}(\bar{K}/K)$ invariant, then
\[\left|\frac{1}{\# S}\sum_{z\in E}\varphi(z)-\int_{\p^1(\C)}\varphi\, \dd\mu_f\right|\leq C\left(\frac{\log(\# E)}{\# E}\right)^{1/2}\cdot\mathrm{Lip}(\varphi).\]
\end{Theorem}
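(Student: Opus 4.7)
The plan is to run an adelic Arakelov-theoretic argument over the finitely generated field $K$, reducing the Lipschitz equidistribution bound to a bound on the potential-theoretic energy of the signed measure $\rho_E-\mu_f$ at the archimedean place, which is then converted into the claimed estimate using the H\"older regularity of the Green function.

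\medskip

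\noindent\emph{Adelic setup.} Since the normalized lift $F$ has resultant $1$, $f$ is defined over $K$. Moriwaki's generalization of the product formula to finitely generated fields endows $K$ with a family of places $v$ (archimedean and Berkovich non-archimedean) together with positive weights $\lambda_v$ such that $\sum_v\lambda_v\log|x|_v=0$ for every $x\in K^\times$. At each $v$ one has the local equilibrium measure $\mu_{f,v}$ on $\p^{1,\mathrm{an}}_{\C_v}$, a normalized Green function $g_{f,v}$, and the Hsia kernel $\delta_v$. A point of $\p^1(\bar K)$ is preperiodic if and only if its canonical height $\hat h_f$ vanishes, and $\hat h_f$ admits an adelic expression as a weighted sum of integrals against the $g_{f,v}$.

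\medskip

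\noindent\emph{Arithmetic energy estimate.} Write $\rho_E:=\frac{1}{\#E}\sum_{z\in E}\delta_z$ and $\sigma_v:=\rho_E-\mu_{f,v}$. At each place, Baker's Fekete-type estimate for the Hsia kernel gives a lower bound
\[ \iint_{x\neq y}\!-\log\delta_v(x,y)\,\dd\rho_E\otimes\dd\rho_E \;\geq\; \iint -\log\delta_v\,\dd\mu_{f,v}\otimes\dd\mu_{f,v} \;+\; \iint -\log\delta_v\,\dd\sigma_v\otimes\dd\sigma_v \;-\; C_v\frac{\log\#E}{\#E}, \]
where each local energy $\iint -\log\delta_v\,\dd\sigma_v\otimes\dd\sigma_v\geq 0$ by positive-semidefiniteness of the Hsia kernel on zero-mass measures, and $C_v=0$ outside a finite set of places depending only on $f$. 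Summing with weights $\lambda_v$, the Galois-invariance of $E$, the vanishing $\hat h_f\equiv 0$ on $E$, and Moriwaki's product formula combine into the Mahler-type identity
\[ \sum_v\lambda_v\iint_{x\neq y}\!-\log\delta_v\,\dd\rho_E\otimes\dd\rho_E \;=\; \sum_v\lambda_v\iint -\log\delta_v\,\dd\mu_{f,v}\otimes\dd\mu_{f,v} \;+\; O\!\left(\frac{\log\#E}{\#E}\right). \]
Subtracting forces the weighted sum of non-negative local energies $\sum_v\lambda_v\iint -\log\delta_v\,\dd\sigma_v\otimes\dd\sigma_v$ to be $O(\log\#E/\#E)$; at the archimedean place this specializes to
\[ 0\;\leq\;\iint -\log\delta_\infty\,\dd\sigma_\infty\otimes\dd\sigma_\infty\;\leq\;C\,\frac{\log\#E}{\#E}. \]

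\medskip

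\noindent\emph{Passage to Lipschitz observables and main obstacle.} At the archimedean place, the quadratic form $\sigma\mapsto\iint -\log\delta_\infty\,\dd\sigma\otimes\dd\sigma$ on zero-mass signed measures coincides, up to a universal constant, with the squared $H^{-1}$-Sobolev norm on $\p^1(\C)$. The H\"older regularity of the quasi-psh Green function $g_f$ guarantees that the continuous self-energy $\iint -\log\delta_\infty\,\dd\mu_f\otimes\dd\mu_f$ is finite and provides the potential-theoretic regularity needed for a clean duality with Lipschitz observables. Since $\|\varphi\|_{H^1}\leq C\,\mathrm{Lip}(\varphi)$ on $\p^1(\C)$, the standard $H^1$--$H^{-1}$ pairing yields
\[ \left|\int\varphi\,\dd\rho_E-\int\varphi\,\dd\mu_f\right|\;\leq\;\|\varphi\|_{H^1}\,\|\sigma_\infty\|_{H^{-1}}\;\leq\;C\,\mathrm{Lip}(\varphi)\left(\frac{\log\#E}{\#E}\right)^{1/2}, \]
which is the claimed bound. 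I expect the main technical obstacle to lie in the second paragraph: making Baker's Fekete error constants $C_v$ vanish outside a finite $f$-dependent set of places and combining them with Moriwaki's weights into a single uniform constant $C$. This is where the generalized product formula interacts most intricately with Baker's local Hsia-kernel estimate and, together with the H\"older regularity of $g_f$, forms the technical core of the argument.
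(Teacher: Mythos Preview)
Your overall strategy matches the paper's: combine Moriwaki's product formula with Baker's local Hsia estimate to bound the off-diagonal energy $-\frac{1}{N(N-1)}\sum_{x\neq y\in E}\Phi_{g_f}(x,y)$ at the chosen archimedean place by $O(\log\#E/\#E)$, and then convert this into a Lipschitz equidistribution bound via an $H^{1}$--$H^{-1}$ type pairing (the paper does this through the regularized energy inequality of Proposition~\ref{tm:speed-FRL}).

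There is, however, a genuine gap in your Moriwaki step when $\mathrm{trdeg}(K/\mathbb{Q})\geq 1$. You describe Moriwaki's formula as endowing $K$ with a family of places carrying positive weights $\lambda_v$, but the archimedean contribution is actually an \emph{integral} $\int_{\mathcal{B}(\C)}\log|u(b)|\,\dd\mu_{\bar{\mathscr{L}}}(b)$ over a positive-dimensional variety, not a discrete weighted sum. The embedding $K\hookrightarrow\C$ we care about corresponds to a single point $b_0\in\mathcal{B}^{\mathrm{gen}}(\C)$, and for any reasonable metric this point has $\mu_{\bar{\mathscr{L}}}$-measure zero. So the product formula by itself gives no information about the energy at $b_0$; your sentence ``at the archimedean place this specializes to \ldots'' is precisely where the argument breaks. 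The paper's fix is to choose the hermitian metric on $\mathscr{L}$ (depending on $E$) so that $\mu_{\bar{\mathscr{L}}}$ is supported on the Shilov boundary of a small polydisk centered at $b_0$, and then to use that $E$ is Galois invariant: this forces $|\Delta_E|$ to descend to $|u|$ with $u\in K^\times$, so $\log|u|$ is pluriharmonic near $b_0$ and the mean-value property recovers $\log|u(b_0)|$ exactly from the integral. That the resulting constant is independent of $E$ (despite the metric depending on $E$) then comes from~\eqref{eq:finiteplace}. This mechanism is the actual technical core, and it is missing from your sketch.

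A smaller issue: your local inequality features $\iint(-\log\delta_v)\,\dd\sigma_v\otimes\dd\sigma_v$ with $\sigma_v=\rho_E-\mu_{f,v}$, but $\rho_E$ is purely atomic, so this quantity is $+\infty$ at every place and the inequality as written is vacuous. The paper never forms this quantity: at the chosen archimedean place it replaces $\rho_E$ by a circle-averaged regularization $\nu_{n,\varepsilon}$ before pairing (Proposition~\ref{tm:speed-FRL}), and at all other places Baker's estimate is applied directly to the off-diagonal sum $\sum_{x\neq y}\Phi_{g_f,v}(x,y)$, with no local energy term appearing at all.
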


\noindent In the case where $f$ is defined over a number field, this is a special case of Theorem~7 from \cite{FRL} which involves the average of the canonical height function of $f$ along $E$. We probably can recover their Theorem~7, but we certainly can not obtain a result in terms of points of small height in the general case since our proof involves a choice of height function which depends a priori on $E$ (so only points of zero height are manageable). 

%

\bigskip

Let us now sketch the strategy of the proof of Theorem~\ref{tm:Galois-quantitative}.
The proof goes in two steps. First, for a probability measure $\mu$ with correctly normalized H\"older potential $g$ on $\p^1(\C)$ ($\mu=\omega+dd^c g$ where $\omega$ is the Fubini-Study measure), we define a notion \emph{quasi-$g$-Fekete configuration} $E\subset \p^1(\C)$ by assuming there is a constant $C>0$ independent of $n=\# E$ such that
\[\sum_{x\neq y\in F}\left(\log d(x,y)-g(x)-g(y)\right)\geq -C n\log(n),\]
where $ d(x,y)$ denotes the spherical distance between $x,y\in \p^1(\C)$. We prove an equidistribution estimate for the probability measures equidistributed on such finite configurations of points on $\p^1(\C)$, see Theorem~\ref{tm:speed-quasi-Fekete} below. A similar idea was introduced in the work of Pritsker~\cite{pritsker} in the case where $g$ is a potential of the equilibrium measure of a non-polar compact subset $K$ of $\C$ (see also the very recent paper \cite{levenberg2025equidistributionconjugatesalgebraicunits} where Fekete distributions are used to study equidistribution of Galois conjugate of algebraic integers). Our proof is strongly inspired by the complex contribution of Favre and Rivera-Letelier's approach~\cite{FRL} of the quantitative equidistribution of points of small heights.

\medskip

As the equilibrium measure of a rational map has H\"older-continuous potentials, to prove Theorem~\ref{Main-theorem}, we are left with proving that the set given by a finite $\mathrm{Gal}(\bar{K}/K)$-invariant set $E\subset \p^1(\C)$ is a quasi-$g_f$-Fekete configuration for an appropriate potential $g_f$ of the measure $\mu_f$. 
When $f$ is defined over a number field, we use the product formula to reduce the question to giving an upper bound of the corresponding quantity $\sum_{x\neq y\in E}\log d_v(x,y)-g_{f,v}(x)-g_{f,v}(y)$ for other absolute values $v$ of the number field over which $f$ is defined. The bound in question is given by a result of Baker~\cite{Baker-Green} which states that for any absolute value $v$ on $K$, we have $\sum_{x\neq y\in E}\log d_v(x,y)-g_{f,v}(x)-g_{f,v}(y)\leq C_v \# E\log(\# E)$ for an explicit constant $C_v$ depending on $f$ and on the absolute value.

 In the more challenging case where $f$ is not defined over a number field, we use that $f$ is defined over a finitely generated field over $\mathbb{Q}$ with positive transcendence degree. In this case, Moriwaki~\cite{Moriwaki} remarked that it can be identified with the field $\mathbb{Q}(\mathcal{B})$ of rational functions of a projective variety $\mathcal{B}$ which is flat over $\mathrm{Spec}(\mathbb{Z})$. Endowing $\mathcal{B}$ with a continuous hermitian line bundle $\bar{\mathscr{L}}$, Moriwaki proved a generalized product formula. We choose an appropriate hermitian metrization $\bar{\mathscr{L}}$ which depends on $E$ which allows to apply the same strategy as in the number field case. Using again Baker's estimate, we are able to obtain a constant independent of the choice of the metrization.\\

In higher dimension, it is known since the work of Briend and Duval~\cite{briendduval} that periodic points also equidistribute the equilibrium measure $\mu_f$ of an endomorphism $f$ of $\p^k(\C)$  as the period tends to infinity (see also Yuan~\cite{yuan} for a result of arithmetic nature). As recalled above, De Thélin, Dinh and Kaufmann~\cite{DeThelin-Dinh-Kaufmann} have recently shown that this convergence is exponential though the rate is unknown so it is natural to try and prove a comparable explicit rate of convergence. In this context, the only known explicit result is that of Yap~\cite{Yap} which is concerned with endomorphisms of $\p^2$ defined over a number field. His approach is quite different from our proof and relies on subtle quantitative estimates on Bergman kernels.
The work of Dinh, Ma and Nguyen~\cite{DMN}, where they estimate the speed of convergence for \emph{Fekete} configurations in dimension at least $2$ gives a direction for further explorations, since the equilibrium measure $\mu_f$ also has H\"older continuous potentials when $k\geq2$. However, there seems to be substantially more important difficulties in this context. In particular, Baker's estimate has only a partial generalization to higher dimensional varieties, see Looper~\cite{Looper-Green}. \\

The paper is organized as follows. In Section~\ref{sec:basic}, we give general preliminaries about the dynamics of rational maps on a field of characteristic zero (metrized or not). In Section~\ref{sec:qFekete}, we give a rate of convergence for quasi-Fekete configurations on $\p^1(\C)$. In Section~\ref{sec:Main}, we recall material on Moriwaki's generalized product formula on finitely generated fields over $\mathbb{Q}$ and we prove Theorems~\ref{Main-theorem} and~\ref{tm:Galois-quantitative}.

\subsection*{Acknowledgment}
The first author would like to thank Marc Abboud, Jean-Beno\^it Bost and Yugang Zhang for helpful discussions. 

\section{Preliminaries}\label{sec:basic}
\subsection{Resultant, lifts, dynatomic polynomials and discriminant}
We refer e.g.~to \cite{Silverman} for the material of this section.
 Let $K$ be a field of characteristic zero and $d\geq2$ be an integer. Let $F_1=\sum_{j}a_jX^jY^{d-j},F_2=\sum_{j}b_jX^jY^{d-j}\in K[X_1,X_2]$ be two homogeneous polynomials of degree $d$. The resultant of $F:=(F_1,F_2)$ is the determinant of the $2d$-square matrix given by
 \[\mathrm{Res}(F):=\det\left(\begin{array}{cccccccc}
 a_d & a_{d-1} & \cdots &\cdots & a_0 & 0&  \cdots & 0\\
 0 & a_d & a_{d-1}& \cdots & & \ddots & \ddots & \vdots \\
 \vdots & \ddots & \ddots &  \ddots & & & a_0&0\\
0 & \cdots &  0 & a_d  &a_{d-1} & \cdots& \cdots  &a_0\\
 b_d & \cdots &\cdots & b_1 & b_0& 0& \cdots  & 0\\
  0 & b_d& & & \ddots & \ddots  & \ddots & \vdots \\
\vdots & \ddots  & \ddots& &\cdots &  b_1 & b_0  & 0\\
0& \cdots &0&b_d&\cdots&\cdots &b_1&b_0 
 \end{array}
 \right)\in K.\]
 It satisfies $\mathrm{Res}(F)=0$ if and only if $F_1$ and $F_2$ have a common zero in $\mathbb{P}^1(\bar{K})$, if $c\in K$, then $\mathrm{Res}(cF)=c^{2d}\mathrm{Res}(F)$, and  $\mathrm{Res}(X^d,Y^d)=1$. 
\medskip

Let $f:\mathbb{P}^1_K\to\mathbb{P}^1_K$ be a degree $d$ rational map. A \emph{lift} of $f$ is a polynomial map $F:\mathbb{A}^2_K\to\mathbb{A}^2_K$ such that $\mathrm{Res}(F)\neq0$ and $f\circ \pi=\pi\circ F$ where $\pi:\mathbb{A}^2\setminus\{0\}\to\mathbb{P}^1$ is the natural projection. 

For any two homogeneous polynomial maps $F,H:\mathbb{A}^2_K\to\mathbb{A}^2_K$, we use the notation
 \[H\wedge F:=H_1F_2-H_2F_1\in K[X,Y],\]
 where $H=(H_1,H_2)$ and $F=(F_1,F_2)$.
 
 For any integer $n\geq1$, we let $d_n$ be the integer defined by
\[d_n:=\sum_{k|n}\mu\left(\frac{n}{k}\right)(d^k+1)\] 
where $\mu$ is the Möbius function and we define the $n$-th \emph{dynatomic polynomial} of $F$ as
\[\Psi_{F,n}(X,Y):=\prod_{\ell | n}\left(F^{\circ \ell}(X,Y)\wedge (X,Y)\right)^{\mu(n/\ell)}.\]
 The homogeneous polynomial $\Psi_{F,n}$ has degree $d_n$ and for $z=\pi(Z)$ with $Z\in \mathbb{A}^2(\bar{K})\setminus\{0\}$, we have $\Psi_{F,n}(Z)=0$ if and only if
\begin{itemize}
\item either $z$ is periodic for $f$ with exact period $n$, i.e. $f^{\circ n}(z)=z$ and $f^{\circ k}(z)\neq z$ for all $1\leq k<n$ (if such a $k$ exists),
\item or $z$ is a multiple periodic point for $f$ of period $m| n$ and the multiplier $(f^{\circ m})'(z)$ is a primitive $n/m$-root of unity in $\bar{K}$.
\end{itemize}
Moreover, by construction, if $a_d,\ldots,a_0, b_d,\ldots,b_0$ are the coefficients of $F$, then $\Psi_{F,n}$ has coefficients in $\mathbb{Z}[a_d,\ldots,a_0]$.
We let
\[\mathrm{Per}_n(f):=\{\pi(Z)\in \mathbb{P}^1(\bar{K})\, : \ \Psi_{F,n}(Z)=0\}\]
and we order the elements of $\mathrm{Per}_n(f)$ counting multiplicity as $\mathrm{Per}_n(f)=\{z_1,\ldots,z_{d_n}\}$. Finally, we can pick a finite collection of points $Z_1=(Z_{1,1},Z_{1,2}),\ldots,Z_{d_n}=(Z_{d_n,1},Z_{d_n,2})\in \mathbb{A}^2(\bar{K})\setminus\{0\}$ such that
\begin{align}
\Psi_{F,n}(X,Y)=\prod_{j=1}^{d_n}(X,Y)\wedge (Z_{j,1},Z_{j,2}),
\end{align}
as homogeneous polynomials in two variables $X,Y$. Define, in the field $\bar{K}$, 
\begin{equation}\label{eq:discriminant}
	\mathrm{disc}(\Psi_{F,n}):= \prod_{i\neq j}Z_i\wedge Z_j.
	\end{equation}
The first easy lemma here is the following (see~e.g.~\cite{Silverman}).
\begin{lemma}\label{lm:discriminant}
Assume $\mathrm{Res}(F)=1$. For any $n\geq1$, we have $\mathrm{disc}(\Psi_{F,n})\in K$ and $\mathrm{disc}(\Psi_{F,n})=0$ if and only if $f$ has a parabolic periodic point of period $m|n$ and  multiplier a $n/m$-primitive root of unity.
\end{lemma}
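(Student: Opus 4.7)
The plan is to split the statement into two parts: that $\mathrm{disc}(\Psi_{F,n})$ lies in $K$, and the vanishing criterion.

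For the first part, I would observe that
\[\mathrm{disc}(\Psi_{F,n})=\prod_{i\neq j}(Z_i\wedge Z_j)=(-1)^{\binom{d_n}{2}}\prod_{i<j}(Z_i\wedge Z_j)^2\]
coincides, up to sign, with the classical discriminant of the binary form $\Psi_{F,n}$. As this expression is symmetric in the projective roots and invariant under any allowable rescaling $Z_j\mapsto\lambda_j Z_j$ with $\prod_j\lambda_j=1$, it is a universal polynomial in the coefficients of $\Psi_{F,n}$, which lie in $\mathbb{Z}[a_d,\ldots,a_0,b_d,\ldots,b_0]\subset K$. Equivalently, one checks Galois invariance directly: any $\sigma\in\mathrm{Gal}(\bar K/K)$ must act by $\sigma(Z_i)=c_i Z_{\tau(i)}$ for some permutation $\tau$ and scalars $c_i$ with $\prod_i c_i=1$ (so as to preserve $\Psi_{F,n}$), after which
\[\sigma(\mathrm{disc}(\Psi_{F,n}))=\Bigl(\prod_i c_i\Bigr)^{2(d_n-1)}\mathrm{disc}(\Psi_{F,n})=\mathrm{disc}(\Psi_{F,n}).\]

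For the vanishing criterion, the strategy is to reduce to a multiplicity computation. Clearly $\mathrm{disc}(\Psi_{F,n})=0$ iff some pair $Z_i,Z_j$ with $i\neq j$ is proportional, i.e.\ iff $\Psi_{F,n}$ admits a projective root of multiplicity at least $2$. Using the defining product $\Psi_{F,n}=\prod_{\ell\mid n}(F^{\circ\ell}\wedge(X,Y))^{\mu(n/\ell)}$ together with M\"obius inversion, I would compute, for $z$ of exact period $m\mid n$ with multiplier $\lambda$, the multiplicity of $z$ as a root of $\Psi_{F,n}$: the multiplicity of $z$ in $F^{\circ\ell}\wedge(X,Y)$ for $m\mid\ell\mid n$ equals $1$ when $\lambda^{\ell/m}\neq 1$ and equals the vanishing order $a_z(\ell)\geq 2$ of $f^{\circ\ell}(w)-w$ at $z$ otherwise. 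A short case analysis distinguishes three scenarios: if $m=n$ and $\lambda=1$, then the multiplicity is $a_z(n)\geq 2$; if $m<n$ and $\lambda$ is a primitive $q:=n/m$-th root of unity, only $\ell=n$ contributes a parabolic term and the multiplicity is $a_z(n)-1$; in all other cases, the sum telescopes via $\sum_{s\mid q}\mu(q/s)=[q=1]$ and the multiplicity is $0$ or $1$.

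The main obstacle is then the lower bound in the case $m<n$: one must show $a_z(n)\geq 3$ to conclude that the multiplicity is $\geq 2$. This is where I would invoke the standard normal form for parabolic germs with multiplier a primitive $q$-th root of unity: such a germ satisfies $f^{\circ qm}(w)-w=cw^{kq+1}+O(w^{kq+2})$ for some $k\geq 1$, so $a_z(n)=kq+1\geq q+1\geq 3$ and the multiplicity in $\Psi_{F,n}$ is $kq\geq q\geq 2$. Combining the three cases with the known characterization of the roots of $\Psi_{F,n}$ then yields the claimed equivalence: $\mathrm{disc}(\Psi_{F,n})=0$ if and only if $f$ admits a parabolic periodic point of period $m\mid n$ with multiplier a primitive $(n/m)$-th root of unity.
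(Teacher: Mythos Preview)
Your argument is correct and is precisely the standard one the paper defers to by citing Silverman: the paper gives no proof of its own for this lemma, and your two-part treatment (identifying $\mathrm{disc}(\Psi_{F,n})$ with the classical discriminant of the binary form, then computing the root multiplicities of $\Psi_{F,n}$ via M\"obius inversion and the parabolic normal form) is exactly the argument in \cite[Theorem~4.5]{Silverman}. The only place where your sketch is slightly compressed is the ``all other cases'' clause when $m<n$ and $\lambda$ has order $r$ with $1<r<q$: there the telescoping requires the additional observation that $a_z(mrt)$ is independent of $t\geq1$ once $(f^{\circ mr})'(z)=1$, but this is immediate from the same local expansion you already invoke and the conclusion (multiplicity $0$) is unchanged.
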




\subsection{Spherical distance, potentials and Hsia Kernel}\label{sec:Hsia}
Fix now an algebraically closed field $(K,|\cdot|)$ of characteristic zero which is complete with respect to a non-trivial absolute value. We refer to \cite{Baker-Rumely-book} for the material of this section.
The spherical distance on $\mathbb{P}^1(K)$ is defined as
\[d(x,y):=\frac{|X\wedge Y|}{\|X\|\cdot\|Y\|}, \quad x,y\in \mathbb{P}^1(K),\]
where $X,Y\in \mathbb{A}^2(K)\setminus\{0\}$ are such that $\pi(X)=x$ and $\pi(Y)=y$, where $\pi:\mathbb{A}^2\setminus\{0\}\to\mathbb{P}^1$ is the natural projection, and where
\[\|(X_1,X_2)\|=\left\{\begin{array}{ll}
\max\{|X_1|,|X_2|\} & \text{if} \ (K,|\cdot|) \ \text{is non-archimedean},\\
\sqrt{|X_1|^2+|X_2|}^2 & \text{if} \ (K,|\cdot|)\simeq(\C,|\cdot|) \ \text{is archimedean}.
\end{array}\right.\]
For any $x,y\in\mathbb{P}^1(K)$ we have $d(x,y)\leq 1$ and the induced function $d:\mathbb{P}^1(K)\times\mathbb{P}^1(K)\to\mathbb{R}_+$ extends to a continuous function $d:\mathbb{P}^{1,\mathrm{an}}_K\times \mathbb{P}^{1,\mathrm{an}}_K\to\mathbb{R}_+$, where $\mathbb{P}^{1,\mathrm{an}}_K$ is the Berkovich projective line over the field $K$. In the case where $K=\C$ is archimedean, the space $\mathbb{P}^{1,\mathrm{an}}_K$ is the Riemann sphere and $d$ is just the spherical distance.

\bigskip

When $(K,|\cdot|)\simeq(\C,|\cdot|)$, recall that the group spanned by rotations ($z\mapsto e^{i\theta} z$) and $PSU(2)$ (i.e. maps of the form $z\mapsto \frac{\cos \theta z -\sin \theta }{\sin \theta z +\cos \theta} $ for $\theta \in \R$) is a group of isometry for the spherical distance which acts bi-transitively on $\p^1$. In addition, the ball $B(0,r)$ centered at $0$ and of radius $r$ for the spherical distance correspond to the euclidean ball of center $0$ and radius $\frac{r}{\sqrt{1-r^2}}$.

 Let $z\in \p^1(\C)$ and $r>0$, pick an isometry $h$ that sends $0$ to $z$. In what follows, the normalized Lebesgue measure on the (spherical) circle $\partial B(z,r)$ of center $z$ and radius $r$ is defined as the direct image by $h$ of the normalized Lebesgue measure (i.e.\, of mass $1$) on $\partial B(0,r)$. Note that this is independent of the choice of $h$ because rotations preserve the Lebesgue measure on $\partial B(0,r)$.

\bigskip

Let now $\omega$ be either the Fubini-study form on $\mathbb{P}^1(\C)$ when $K$ is archimedean or the 
Dirac mass at the Gauss point $\zeta_{0,1}$ of $\p^{1,\mathrm{an}}_K$ when $K$ is non-archimedean. Recall that a probability measure $\mu$ on $\mathbb{P}^{1,\mathrm{an}}_K$ is said to have continuous potential if $\mu=\omega+\dd\dd^c g$ for some continuous function $g:\mathbb{P}^{1,\mathrm{an}}_K\to\mathbb{R}$. Note that such a function $g$ is unique up to additive constants. One can e.g. define $g$ as
\[g(x):=\int_{\mathbb{P}^{1,\mathrm{an}}_K}\log d(x,y)\dd\mu(y), \quad x\in\mathbb{P}^{1,\mathrm{an}}_K.\]
To such a function $g$, we can associate the Hsia kernel
\[\Phi_g(x,y):=\log d(x,y)-g(x)-g(y), \quad (x,y)\in \mathbb{P}^{1,\mathrm{an}}_K\times \mathbb{P}^{1,\mathrm{an}}_K.\]
The function $\Phi_g$ takes values in $\R\cup\{-\infty\}$ and satisfies $\dd\dd^c_y\Phi_g=\delta_x-\mu$ so that we have
\[\int_{\mathbb{P}^{1,\mathrm{an}}_K}\Phi_g(x,y)\,\dd\mu(y)=\iint_{\mathbb{P}^{1,\mathrm{an}}_K\times\mathbb{P}^{1,\mathrm{an}}_K}\Phi_g(w,y)\,\dd\mu(y)\dd\mu(w), \quad \text{for all} \ x\in \mathbb{P}^{1,\mathrm{an}}_K.\]

\subsection{The dynamical Hsia kernel and an estimate of Baker}\label{sec:dynmetrized}
Let $f:\mathbb{P}^1_K\to\mathbb{P}^1_K$ be a degree $d\geq2$ defined over $K$. Let $F:\mathbb{A}^2_K\to\mathbb{A}^2_K$ be a lift of $F$. The \emph{homogeneous Green function} $G_F$ of $F$ is
\[G_F:=\lim_{n\to\infty}d^{-n}\log\|F^{\circ n}\|,\]
where $\|X,Y\|=\sqrt{|X|^2+|Y|^2}$ if $(K,|\cdot|)\simeq(\C,|\cdot|)$ is archimedean and $\|X,Y\|=\max\{|X|,|Y|\}$ otherwise. The function $G_F-\log\|\cdot\|$ descends to $\mathbb{P}^1(K)$ as a continuous function which extends as a continuous function $g_F:\p^{1,\mathrm{an}}_K\to\R$. The measure
\[\mu_f:=\omega+\dd\dd^cg_F\]
is the \emph{equilibrium measure} of $f$ and it is independent of the choice of $F$. Moreover, the associated Hsia kernel, defined as $\Phi_{g_F}(x,y)=\log d(x,y)-g_F(x)-g_F(y)$ for $x,y\in \p^{1,\mathrm{an}}_K$, satisfies
\[\iint_{\p^{1,\mathrm{an}}_K\times \p^{1,\mathrm{an}}_K}\Phi_{g_F}(x,y)\dd\mu_f(x)\dd\mu_f(y)=-\frac{1}{d(d-1)}\log|\mathrm{Res}(F)|.\]
In particular, the following function is independent from the choice of lift
\begin{align}
\Phi_{g_f}(x,y):=\Phi_{g_F}(x,y)+\frac{1}{d(d-1)}\log|\mathrm{Res}(F)|.\label{eq-samelift}
\end{align}

For a rational map $f:\p^1\to\p^1$ defined over $K$, the \emph{good lift} of $f$ is the unique homogeneous polynomial lift $F:\mathbb{A}^2\to\mathbb{A}^2$ with $\mathrm{Res}(F)=1$.
The following follows from \cite{Baker-Green}:

\begin{lemma}\label{lm:BakerHsia}
There is a constant $C_d>0$ depending only on $d$ such that the following holds. For any complete algebraically closed field $(K,|\cdot|)$ of characteristic zero, any rational map $f$ of degree $d$ defined over $K$ and
 any collection $\{z_1,\ldots,z_N\}$ of $N$ distinct points of $\p^1(K)$, we have
\[\sum_{i\neq j}\Phi_{g_f}(z_i,z_j)\leq C_d\left(\epsilon_K+ \log^+\max\{|a_i|,|b_j|\} \right) N\log(N),\]
where $\epsilon_K=1$ is $K$ is archimedean and $\epsilon_K=0$ otherwise, and where $a_0,\ldots,a_d,b_0,\ldots,b_d\in K$ are the coefficients of the good lift $F$ of $f$.
\end{lemma}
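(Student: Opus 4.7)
The plan is to rewrite the left-hand side in terms of the good lift $F$ and then directly invoke the energy estimate of \cite{Baker-Green}.

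I would start with a reformulation. Let $F$ be the good lift of $f$, with coefficients $a_0,\ldots,a_d,b_0,\ldots,b_d$, so that $\mathrm{Res}(F)=1$. For each $z_i$, choose $Z_i\in K^2\setminus\{0\}$ with $\pi(Z_i)=z_i$ and $\|Z_i\|=1$. Directly from the definitions of the spherical distance and of $g_F$, one has $\log d(z_i,z_j)=\log|Z_i\wedge Z_j|$ and $g_F(z_i)=G_F(Z_i)$, so equation \eqref{eq-samelift} together with $\mathrm{Res}(F)=1$ yields
\[\sum_{i\neq j}\Phi_{g_f}(z_i,z_j)\;=\;\sum_{i\neq j}\log|Z_i\wedge Z_j|\;-\;2(N-1)\sum_{i=1}^N G_F(Z_i).\]

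The lemma then reduces to bounding this quantity by $C_d(\epsilon_K+\log^+\max\{|a_i|,|b_j|\})\,N\log N$, which is precisely the content of the main theorem in \cite{Baker-Green}. Its proof is iterative: one uses the bihomogeneous identity $F(X)\wedge F(Y)=(X\wedge Y)\cdot R_F(X,Y)$, where $R_F$ has bi-degree $(d-1,d-1)$ with integer-coefficient polynomial expressions in those of $F$, to compare $\log|Z_i\wedge Z_j|$ with $d^{-1}\log|F(Z_i)\wedge F(Z_j)|$. Telescoping using $G_F=\lim d^{-n}\log\|F^{\circ n}\|$ and the geometric weights $d^{-n}$ produces a field-independent constant $C_d$, with the factor $\log^+\max\{|a_i|,|b_j|\}$ arising from the coefficient-bound on $R_F$, and the archimedean adjustment $\epsilon_K$ reflecting the discrepancy between the $\ell^2$- and $\ell^\infty$-norms.

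The main obstacle will be the $N\log N$ factor: a naive one-step bound only yields order $N^2$. Obtaining the sharper $N\log N$ requires handling the clustering of the $z_i$'s at all scales (close pairs produce very negative values of $\log|Z_i\wedge Z_j|$ that must be balanced against $G_F$-terms), which in Baker's approach is dealt with via a dyadic decomposition of the configuration at spherical scales, summing log-contributions across scales. Since \cite{Baker-Green} states the result uniformly across archimedean and non-archimedean base fields in exactly the form needed, invoking it completes the proof.
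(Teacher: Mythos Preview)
Your overall approach matches the paper's --- reduce to Baker's energy estimate --- but your invocation of \cite{Baker-Green} glosses over two steps that the paper handles explicitly and that are not contained in any single statement of \cite{Baker-Green}.

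First, Baker's bound (Corollary~2.3 of \cite{Baker-Green}) is not phrased in terms of the coefficients of $F$ but in terms of $R(F)$, the smallest $R>0$ with $\{G_F\leq 0\}\subset\bar B(0,R)\subset K^2$: for $N=d^k$ one gets
\[\sum_{i\neq j}\Phi_{g_f}(z_i,z_j)\leq \epsilon_K N\log N + 2(d-1)k\,\log^+R(F)\cdot N.\]
Turning $\log^+R(F)$ into $\log^+\max\{|a_i|,|b_j|\}$ requires a separate uniform lower bound on $G_F-\log\|\cdot\|$; the paper imports this from \cite[Lemma~2.2]{GOV2}. Second, Baker's estimate is stated only for $N=d^k$; the paper extends to arbitrary $N$ by sandwiching between consecutive powers of $d$ and using that the normalised suprema $\sup\frac{1}{N(N-1)}\sum_{i\neq j}\Phi_{g_f}$ are nonincreasing in $N$ (\cite[Lemma~2.13]{Baker-Green}).

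Your sketch of the underlying mechanism is also off. The $\log N$ in Baker's argument is not produced by a ``dyadic decomposition of the configuration at spherical scales'' but simply by the number $k\sim\log_dN$ of iterations when $N=d^k$, combined with the monotonicity above. The identity $F(X)\wedge F(Y)=(X\wedge Y)R_F(X,Y)$ is correct, but it does not by itself give a telescoping upper bound of the shape you describe: the difference $\log|Z_i\wedge Z_j|-d^{-1}\log|F(Z_i)\wedge F(Z_j)|$ still contains the unbounded term $(1-d^{-1})\log|Z_i\wedge Z_j|$, so nothing collapses. Baker's actual route goes through the filled-Julia-set radius $R(F)$ rather than a direct coefficient bound on $R_F$.
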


\begin{proof} Let $F$ be the homogeneous lift of $f$ with $\mathrm{Res}(F)=1$. The quantity $D_\varphi$ in \cite{Baker-Green} is equal here to $- \sum_{i\neq j}\Phi_{g_f}(z_i,z_j)$ and for  $N=d^k$, \cite{Baker-Green}[(2.4) Corollary~2.3] implies
\[ \sum_{i\neq j}\Phi_{g_f}(z_i,z_j)\leq \epsilon_K N \log N+2 (d-1)k\log^+ R(F) N, \]
 where $R(F)$ is the smallest $R>0$ such that $\{G_F\leq 0\}\subset \bar{B}(0,R)$ as subsets of $K^2$ (indeed, $r(f)=0$ in our case, we can bound from above $\log R(F)$ by $\log^+ R(F)$,  and $\alpha=(d-1)k$ here since $\mathrm{Res}(F)=1$). Since $\sup_{z'_1, \dots, z'_N} \frac{1}{N(N-1)} \sum_{i\neq j}\Phi_{g_f}(z'_i,z'_j)$ is decreasing \cite{Baker-Green}[Lemma 2.13] and since for any integer $N\geq 1$, we can find  $N'$ of the form $N'=d^k$ with
\[ \frac{N-1}{d} \leq N'-1 \leq N-1,\]
we deduce	
\[ \frac{1}{N(N-1)} \sum_{i\neq j}\Phi_{g_f}(z_i,z_j)\leq \frac{1}{N'(N'-1)}  (\epsilon_K N' \log N'+2 (d-1)k\log^+ R(F) N') \]
which reads as
\[ \sum_{i\neq j}\Phi_{g_f}(z_i,z_j)\leq \frac{N(N-1)}{(N'-1)}  (\epsilon_K  \log N'+2 (d-1)k\log^+ R(F)). \]	
Hence
\[ \sum_{i\neq j}\Phi_{g_f}(z_i,z_j)\leq dN  (\epsilon_K  \log N+ 2(d-1)k\log^+ R(F)) \]	
So, using the bound $k\leq \log N$, we have	
\[ \sum_{i\neq j}\Phi_{g_f}(z_i,z_j)\leq d(\epsilon_K + 2(d-1)\log^+ R(F))N\log N. \]
We now apply the left-hand side of the second inequality of \cite[Lemma~2.2]{GOV2}: note that $g_F$ in  \cite[Lemma~2.2]{GOV2} is equal to $G_F-\log\|Z\|$ and the proof shows that the constant $A_1(d,K)$ of \cite[Lemma~2.2]{GOV2} is equal to $0$ if $K$ is non archimedean and depends only on $d$ when $K$ is archimedean. So, this means that for any $Z\in K^2\setminus\{(0,0)\}$, we have
\[-(2d-1)\log\max\{|a_i|,|b_j|\}-A_K\leq (d-1)G_F(Z)-(d-1)\log\|Z\|,\]
where $A_K>0$ depends only on $d$ when $K$ is archimedean and $A_K=0$ when $K$ is non-archimedean. In particular, if $G_F(Z)\leq 0$, we have 
\[\log\|Z\|\leq \frac{2d-1}{d-1}\log\max\{|a_i|,|b_j|\}+\frac{A_K}{d-1}\]
and
\[ \sum_{i\neq j}\Phi_{g_f}(z_i,z_j)\leq d(\epsilon_K +A_K+ 2(2d-1)\log\max\{|a_i|,|b_j|\})N\log N. \]
The conclusion follows.
\end{proof}

\begin{remark}\normalfont
 In particular, if $K$ is non-archimedean and $\max\{|a_i|,|b_j|\}\leq 1$, then 
\[\sum_{i\neq j}\Phi_{g_f}(z_i,z_j)\leq 0.\]
\end{remark}

\section{Distribution of quasi-Fekete configurations}\label{sec:qFekete}

Let $\mu$ be a probability measure on $\mathbb{P}^1(\mathbb{C})$ with continuous potential $g:\p^1(\C)\to\R$ i.e. $\mu=\omega+\dd\dd^cg$, where $\omega$ is the Fubini-Study form. Recall that
\[\Phi_g(x,y):=\log d(x,y)-g(x)-g(y)\in \R\cup\{-\infty\}, \quad (x,y)\in \p^1(\C)\times\p^1(\C)\]
is the Hsia kernel \cite{Baker-Rumely-book} associated with $g$. We normalize $g$ so that 
\[\int_{\mathbb{P}^1(\C)\times\mathbb{P}^1(\C)}\Phi_g(x,y)\,\dd\mu(x)\dd\mu(y)=0.\]
Such a $g$ will be called in the sequel the \emph{good potential} of $\mu$. By the above, this implies 
\[\forall x\in \p^1(\C), \  \int_{\mathbb{P}^1(\C)}\Phi_g(x,y)\dd\mu(y)=0.\]

\medskip

Pick $\alpha\in(0,1]$. We say that a function $g:\p^1(\C)\to\R$ is $\alpha$-H\"older continuous if
\[|g(z)-g(w)|\leq C d(z,w)^\alpha, \quad z,w\in \mathbb{P}^1(\mathbb{C})\]
for some constant $C>0$.
The $\mathscr{C}^{0,\alpha}$-semi-norm of $g$ is then defined as
\[\|g\|_{\mathscr{C}^{0,\alpha}}:=\sup_{x\neq y\in \p^1(\C)}\frac{|g(x)-g(y)|}{d(x,y)^\alpha}.\]
Denote by $\mathscr{C}^{\alpha}(\p^1(\C),\R)$ the vector space of all $\alpha$-H\"older continuous functions on $\p ^1(\C)$.
We say a probability measure $\mu$ on $\mathbb{P}^1(\mathbb{C})$ has \emph{$\alpha$-H\"older potential} if there is a function $g:\mathbb{P}^1(\mathbb{C})\to\mathbb{R}$ which is $\alpha$-H\"older continuous such that $\mu=\omega+\dd\dd^cg$.

\medskip

\begin{example}\normalfont
Let $f:\p^1(\C)\to\p^1(\C)$ is a rational map and let $F:\C^2\to\C^2$ be a lift of $f$. Then the function $g_F$ is $\alpha$-H\"older for some $\alpha\in(0,1]$ that depends only on $f$, see e.g.~\cite{Sibony}, whence $\mu_f$ has H\"older potentials.

\medskip

Moreover, if we choose $F$ so that $\mathrm{Res}(F)=1$, the function $g_F$ is the good potential of the measure $\mu_f$ by \S~\ref{sec:dynmetrized}. We denote $g_f:=g_F$ this good potential.
\end{example}

\begin{definition}
Let $\mu$ be a probability measure on $\mathbb{P}^1(\C)$ with continuous potential and let $g$ be its good potential. Pick a constant $C>0$. A finite set $F\subset\mathbb{P}^1(\C)$ of cardinality $n\geq2$ is a \emph{quasi-$g$-Fekete configuration} with constant $C$ if 
\[\sum_{x\neq y\in F}\Phi_g(x,y)\geq -C n \log(n).\]
\end{definition}

\begin{remark}\normalfont
Our normalization of the good potential $g$ of the measure $\mu$ means that the energy of the potential is $0$. Recall that a $g$-Fekete configuration of order $n$ is a collection $F\subset \p^1(\C)$ of $n$ distinct points attaining the maximum of the quantity $\sum_{x\neq y\in F}\Phi_g(x,y)$ among finite subsets $F\subset \p^1(\C)$ with $\# F=n$. A simple application of Fubini's Theorem implies that if $F$ is a $g$-Fekete configuration of order $n$, then
\[\sum_{x\neq y\in F}\Phi_g(x,y)\geq0.\]
In particular, a $g$-Fekete configuration of order $n$ is quasi-$g$-Fekete for any constant $C>0$.
\end{remark}

 We denote by $\|\cdot \|_\alpha$ the semi-norm on $W^{1,2}(\mathbb{P}^1(\C),\mathbb{R}) \cap \mathscr{C}^{\alpha} (\mathbb{P}^1(\C), \R)$ defined by 
\[\|\varphi\|_\alpha=\|\varphi\|_{\mathscr{C}^{0,\alpha}}+\|\nabla\varphi\|_{\mathrm{L}^2}.\]
The aim of this section is to give a relatively simple proof of the following.
\begin{theorem}\label{tm:speed-quasi-Fekete}
Let $\mu$ be a probability measure on $\mathbb{P}^1(\C)$ with $\alpha$-H\"older good potential $g$ and pick $C'>0$. Then for any $\gamma\in [\min(1/4,2\alpha),1]$, any $\varphi \in W^{1,2}(\mathbb{P}^1(\C),\mathbb{R}) \cap \mathscr{C}^{\gamma} (\mathbb{P}^1(\C), \R)$ and any quasi-$g$-Fekete configuration $F\subset\mathbb{P}^1(\C)$ of cardinality $n\geq2$ with constant $C'$, 
 \begin{align*}
  & \left| \int_{\mathbb{P}^1(\C)} \varphi \, \mathrm{d} \mu -  \frac{1}{n} \sum_{z\in F} \varphi (z) \right| \leq C_1\left(\frac{\log(n)}{n}\right)^{1/2}\|\varphi\|_\gamma,
 \end{align*}
where we can take $C_1=\sqrt{2\|g\|_{\mathscr{C}^{0,\alpha}}+1+C'+\max(2,1/\alpha)}$.
\end{theorem}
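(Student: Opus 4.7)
I would follow the complex half of the Favre--Rivera-Letelier approach: regularize the empirical measure, dualize against $\varphi$ by integration by parts, then bound the resulting Dirichlet energy via the quasi-Fekete hypothesis. Write $\nu_n := \frac{1}{n}\sum_{z\in F}\delta_z$. For a scale $\epsilon>0$ to be optimized, let $\sigma_{z,\epsilon}$ denote the normalized Lebesgue measure on the spherical circle $\partial B(z,\epsilon)$ and set $\nu_n^\epsilon := \frac{1}{n}\sum_{z\in F}\sigma_{z,\epsilon}$. Splitting
\begin{equation*}
\int\varphi\,\dd(\nu_n-\mu) \;=\; \int\varphi\,\dd(\nu_n-\nu_n^\epsilon) \;+\; \int\varphi\,\dd(\nu_n^\epsilon-\mu),
\end{equation*}
the first term is bounded by $\|\varphi\|_{\mathscr{C}^{0,\gamma}}\,\epsilon^{\gamma}$ because $\sigma_{z,\epsilon}$ is supported within distance $\epsilon$ of $z$ and $\varphi$ is $\gamma$-H\"older.

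For the second term, the signed measure $T := \nu_n^\epsilon - \mu$ has mass zero and bounded potential $u_T$ (both endpoints have continuous potentials), so integration by parts together with Cauchy--Schwarz yields
\begin{equation*}
\left|\int\varphi\,\dd T\right| \;\leq\; \|\nabla\varphi\|_{L^2}\,\sqrt{\langle T,T\rangle},\qquad \langle T,T\rangle := -\iint \log d(x,y)\,\dd T(x)\,\dd T(y).
\end{equation*}
Using the decomposition $\log d(x,y) = \Phi_g(x,y) + g(x) + g(y)$, the mass-zero condition on $T$, and the normalization $\int\Phi_g(x,\cdot)\,\dd\mu \equiv 0$, the $g$-terms cancel pairwise and
\begin{equation*}
\langle T,T\rangle \;=\; -\iint \Phi_g\,\dd\nu_n^\epsilon\otimes\dd\nu_n^\epsilon.
\end{equation*}

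The quasi-Fekete hypothesis now controls this last quantity. Using the Euclidean mean-value identity $\int\log|u-v|\,\dd\sigma_{x,\epsilon}(u) = \max(\log|v-x|,\log\epsilon)$, an $O(\epsilon^2|\log\epsilon|)$ spherical correction, and the $\alpha$-H\"older estimate $|\int g\,\dd\sigma_{z,\epsilon} - g(z)| \leq \|g\|_{\mathscr{C}^{0,\alpha}}\epsilon^{\alpha}$, one obtains
\begin{equation*}
\iint \Phi_g\,\dd\sigma_{x,\epsilon}\,\dd\sigma_{y,\epsilon} \;\geq\; \Phi_g(x,y) - 2\|g\|_{\mathscr{C}^{0,\alpha}}\epsilon^{\alpha} - O(\epsilon^{2}|\log\epsilon|)\qquad(x\ne y),
\end{equation*}
while each of the $n$ diagonal pairs $x=y$ contributes $\log\epsilon - 2g(x) + O(\epsilon^{\alpha})$. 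Summing and applying $\sum_{x\ne y\in F}\Phi_g(x,y) \geq -C' n\log n$ yields
\begin{equation*}
\langle T,T\rangle \;\leq\; \frac{C'\log n - \log\epsilon}{n} \;+\; 2\|g\|_{\mathscr{C}^{0,\alpha}}\epsilon^{\alpha} \;+\; O(n^{-1}).
\end{equation*}

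It remains to optimize. Choosing $\epsilon = (\log n/n)^{\max(1/\alpha,\,1/(2\gamma))}$ makes $\epsilon^{\alpha}\leq \log n/n$, $-\log\epsilon/n \leq \max(1/\alpha,1/(2\gamma))\,\log n/n$, and $\epsilon^{\gamma}\leq \sqrt{\log n/n}$; the assumption $\gamma\geq\min(1/4,2\alpha)$ is exactly what ensures $\max(1/\alpha,1/(2\gamma)) \leq \max(2,1/\alpha)$, producing the announced value of $C_1$ after explicit bookkeeping. The main technical difficulty in carrying this out is the uniform control of $\iint\log d(u,v)\,\dd\sigma_{x,\epsilon}\,\dd\sigma_{y,\epsilon}$ when $d(x,y)\lesssim\epsilon$: the mean-value identity then degenerates into a one-sided inequality, but the uniform lower bound $\int\log|u-v|\,\dd\sigma_{x,\epsilon}(u)\geq \log\epsilon$ guarantees that overlapping pairs are harmlessly absorbed into the diagonal $\log\epsilon/n$ contribution. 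Propagating the $\alpha$-H\"older error of $g$ through the energy estimate is what dictates the lower bound on $\gamma$ in the hypothesis.
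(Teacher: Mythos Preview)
Your proposal is correct and follows the same Favre--Rivera-Letelier scheme as the paper: regularize $\nu_n$ by circle averages, reduce via Stokes and Cauchy--Schwarz to the mutual energy $(\nu_n^\epsilon-\mu,\nu_n^\epsilon-\mu)$, bound that energy using the quasi-Fekete inequality, and optimize in $\epsilon$. The only differences are organizational: you exploit the clean identity $\langle T,T\rangle=-\iint\Phi_g\,\dd\nu_n^\epsilon\otimes\dd\nu_n^\epsilon$ (a consequence of the good-potential normalization) where the paper expands the mutual energy into the three pieces $(\mu,\mu)$, $(\nu_{n,\epsilon},\nu_{n,\epsilon})$, $(\mu,\nu_{n,\epsilon})$ and treats each by hand, and the paper opts for the $\gamma$-independent choice $\epsilon=n^{-\max(2,1/\alpha)}$ together with a cruder $O(\sqrt{\epsilon})$ off-diagonal error in place of your $O(\epsilon^2|\log\epsilon|)$.
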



\begin{remark}\normalfont
The proof also gives a rate of convergence for less regular functions. Indeed, if $\varphi\in W^{1,2}(\p^1(\C),\R)\cap \mathscr{C}^{\gamma}(\p^1(\C),\R)$ for $\gamma\in(0,\min(1/4,2\alpha)$, we prove
 \begin{align*}
  & \left| \int_{\mathbb{P}^1(\C)} \varphi \, \mathrm{d} \mu -  \frac{1}{n} \sum_{z\in F} \varphi (z) \right| \leq C_1\left(\frac{\log(n)}{n}\right)^{1/2}\|\nabla\varphi\|_{\mathrm{L}^2}+\frac{1}{n^{\gamma\max(2,1/\alpha)}}\cdot\|\varphi\|_{\mathscr{C}^{0,\gamma}}.
 \end{align*}
\end{remark}

\subsection{A result \`a la Favre-Rivera-Letelier}
Theorem~\ref{tm:speed-quasi-Fekete} is a direct consequence of the following proposition inspired by Favre and Rivera-Letelier~\cite{FRL}:

\begin{proposition}\label{tm:speed-FRL}
Pick $\alpha \in (0,1]$ and let $\mu$ be a probability measure with $\alpha$-H\"older continuous potential $g$. For all $\varphi \in \mathscr{C}^0(\p^1(\C),\R)\cap W^{1,2}(\mathbb{P}^1(\C),\mathbb{R})$ with modulus of continuity $\eta_\varphi$, any finite set $F\subset\mathbb{P}^1(\C)$ of cardinality $n\geq2$, we have
 \begin{align*}
  & \left| \int_{\mathbb{P}^1(\C)} \varphi \, \mathrm{d} \mu -  \frac{1}{n} \sum_{z\in F} \varphi (z) \right| \leq  \eta_\varphi\left(\frac{1}{n^{\max(2,1/\alpha)}}\right)+A^{1/2}\cdot \|\nabla\varphi\|_{\mathrm{L}^2} ,
 \end{align*}
 where we can compute $A=A(g,C,\alpha,F)$ as
 \[A=-\frac{1}{n(n-1)}\sum_{x\neq y\in F}\Phi_g(x,y)+\left(2\|g\|_{\mathscr{C}^{0,\alpha}}+2+\max\left(2,\frac{1}{\alpha}\right)\right)\frac{\log(n)}{n}.\]
\end{proposition}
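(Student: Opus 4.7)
The plan is to implement the archimedean strategy of Favre and Rivera-Letelier~\cite{FRL}: regularize the empirical measure $\sigma:=\frac{1}{n}\sum_{z\in F}\delta_z$ by replacing each Dirac mass with a spherical circle average, express the resulting signed measure as $\dd\dd^c$ of an explicit Hsia potential, and bound its pairing with $\varphi$ by Cauchy--Schwarz against the Dirichlet energy of that potential.

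Fix $r\in(0,1)$ to be chosen later and set $\sigma_r:=\frac{1}{n}\sum_{z\in F}\delta_{z,r}$, where $\delta_{z,r}$ is the normalized Lebesgue measure on the spherical circle $\partial B(z,r)$ from \S\ref{sec:Hsia}. Since each $\delta_{z,r}$ is supported in $B(z,r)$, the modulus of continuity of $\varphi$ yields $|\int\varphi\,\dd\sigma-\int\varphi\,\dd\sigma_r|\leq\eta_\varphi(r)$, so it suffices to bound $|\int\varphi\,\dd\nu_r|$ with $\nu_r:=\sigma_r-\mu$. Define the potential
\[U(y):=\iint\Phi_g(w,y)\,\dd\sigma_r(w).\]
From $\dd\dd^c_y\Phi_g(w,y)=\delta_w-\mu$ one has $\dd\dd^c U=\nu_r$; Stokes' formula and Cauchy--Schwarz then give $|\int\varphi\,\dd\nu_r|\leq\|\nabla\varphi\|_{L^2}\cdot\|\nabla U\|_{L^2}$. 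Because $\int\Phi_g(w,\cdot)\,\dd\mu=0$ by the good-potential normalization, the cross-term with $\mu$ in the Dirichlet pairing vanishes and
\[\|\nabla U\|_{L^2}^2=-\int U\,\dd\nu_r=-\iint\Phi_g(w,y)\,\dd\sigma_r(w)\,\dd\sigma_r(y).\]

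The double integral is then evaluated by splitting into off-diagonal ($z\ne z'$) and diagonal ($z=z'$) contributions. The $\alpha$-H\"older bound on $g$ gives $|\int g\,\dd\delta_{z,r}-g(z)|\leq\|g\|_{\mathscr{C}^{0,\alpha}}\,r^\alpha$. For the $\log d$-part, harmonicity of $\log|w-y|$ in an affine chart away from $y$, combined with a direct estimate of the Fubini--Study correction along the spherical circles $\partial B(z,r),\partial B(z',r)$, yields lower bounds of the form
\[\iint\log d(w,y)\,\dd\delta_{z,r}\,\dd\delta_{z',r}\geq\log d(z,z')-O(r),\qquad \iint\log d(w,y)\,\dd\delta_{z,r}\,\dd\delta_{z,r}\geq\log r-O(r).\]
When $d(z,z')\leq r$ the first inequality still goes in the useful direction because both sides are then $\leq\log r+O(1)$. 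Assembling all pieces produces an estimate of the form
\[\|\nabla U\|_{L^2}^2\leq -\frac{1}{n^2}\sum_{z\ne z'\in F}\Phi_g(z,z')-\frac{\log r}{n}+2\|g\|_{\mathscr{C}^{0,\alpha}}\,r^\alpha+\frac{C(g)}{n},\]
where $C(g)$ collects the remaining $O(1/n)$ contributions (including the effect of $\frac{1}{n}\sum_z g(z)$) that will absorb into the "$+2$" constant in the statement.

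Finally, take $r:=n^{-\max(2,1/\alpha)}$: this makes $r^\alpha\leq 1/n$, $|\log r|\leq \max(2,1/\alpha)\log n$, and $\eta_\varphi(r)=\eta_\varphi(n^{-\max(2,1/\alpha)})$. The discrepancy between $\tfrac{1}{n^2}$ and $\tfrac{1}{n(n-1)}$ is of order $n^{-3}$ times the Hsia sum, at most $O(\log n)$ in absolute value via a quasi-Fekete-type bound, hence absorbed into the $\log n/n$ remainder. This yields $\|\nabla U\|_{L^2}^2\leq A$ with the stated value of $A$, and the triangle inequality then produces the announced estimate. The main technical obstacle is the uniform circle-average estimate for $\log d$: when two points of $F$ lie closer than the regularization scale $r$, the naive mean-value argument breaks down and one must verify by a direct computation that the required bound still holds in the correct direction. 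This is the technical heart of the adaptation of the Favre--Rivera-Letelier archimedean computation to a general quasi-Fekete configuration, and is where the choice of spherical circle averages (rather than, say, ball averages) becomes essential.
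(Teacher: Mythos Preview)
Your overall architecture matches the paper's proof exactly: regularize each Dirac mass by the normalized Lebesgue measure on a spherical circle of radius $r$, control $\lvert\int\varphi\,\dd\sigma-\int\varphi\,\dd\sigma_r\rvert$ by $\eta_\varphi(r)$, apply Stokes and Cauchy--Schwarz to reduce to the Dirichlet energy $(\sigma_r-\mu,\sigma_r-\mu)$, and then estimate the diagonal and off-diagonal circle pairings before choosing $r=n^{-\max(2,1/\alpha)}$. The identification $\|\nabla U\|_{L^2}^2=-\iint\Phi_g\,\dd\sigma_r\,\dd\sigma_r$ is also correct and is a slightly slicker packaging of what the paper writes as $(\mu,\mu)+(\nu_{n,\varepsilon},\nu_{n,\varepsilon})-2(\mu,\nu_{n,\varepsilon})$.

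There is, however, a genuine gap in how you pass from the $\tfrac{1}{n^2}$ normalization to the $\tfrac{1}{n(n-1)}$ normalization that appears in the stated $A$. You invoke ``a quasi-Fekete-type bound'' to control $\lvert\sum_{z\neq z'}\Phi_g(z,z')\rvert$ by $O(n^2\log n)$, but the proposition is stated for an \emph{arbitrary} finite set $F$: two points of $F$ may sit at distance $e^{-n^{100}}$, making $\sum\Phi_g$ enormously negative, and in any case the quasi-Fekete estimate is a downstream consequence of this very proposition, so appealing to it here is circular. Similarly, your diagonal leftover ``$\tfrac{1}{n}\sum_z g(z)$'' cannot be absorbed into the constant ``$+2$'': it is of size $\sup\lvert g\rvert/n$, and $\sup\lvert g\rvert$ is not controlled by $\|g\|_{\mathscr{C}^{0,\alpha}}$.

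The paper avoids both issues in one stroke by separating $\Phi_g=\log d-g-g$ and using that $\log d\leq 0$. Since each regularized pairing $(\mu_{i,\varepsilon},\mu_{j,\varepsilon})=-\iint\log d\,\dd\mu_{i,\varepsilon}\,\dd\mu_{j,\varepsilon}$ is nonnegative, one has
\[
\frac{1}{n^2}\sum_{i\neq j}(\mu_{i,\varepsilon},\mu_{j,\varepsilon})\ \leq\ \frac{1}{n(n-1)}\sum_{i\neq j}(\mu_{i,\varepsilon},\mu_{j,\varepsilon}),
\]
which produces the factor $\tfrac{1}{n(n-1)}$ directly in front of $\sum_{i\neq j}\log d(z_i,z_j)$. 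With that normalization the term $\tfrac{2}{n}\sum_j g(z_j)$ coming from the diagonal combines \emph{exactly} with the $g$-part of $\Phi_g$ to give $-\tfrac{1}{n(n-1)}\sum_{i\neq j}\Phi_g(z_i,z_j)$ with no $\sup\lvert g\rvert$ residue. Equivalently, in your bookkeeping the two problematic leftovers combine as
\[
-\frac{1}{n^2}\sum_{z\neq z'}\Phi_g(z,z')+\frac{2}{n^2}\sum_{z}g(z)
= -\frac{1}{n(n-1)}\sum_{z\neq z'}\Phi_g(z,z')+\frac{1}{n^2(n-1)}\sum_{z\neq z'}\log d(z,z')
\ \leq\ -\frac{1}{n(n-1)}\sum_{z\neq z'}\Phi_g(z,z'),
\]
again because $\log d\leq 0$. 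So the fix is short, but it is \emph{not} the justification you gave; replace the quasi-Fekete hand-wave by this positivity argument and your sketch goes through.
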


Theorem~\ref{tm:speed-quasi-Fekete} follows directly from Proposition~\ref{tm:speed-FRL} since for a quasi-$g$-Fekete configuration with constant $C'$, we have $-\sum_{x\neq y\in F}\Phi_g(x,y)\leq C' n \log(n)$ so the constant $A$ in Proposition~\ref{tm:speed-FRL} is $O\left(\frac{\log(n)}{n}\right )$, with explicit constants.

\bigskip

Given any two probability measures $\mu$ and $\nu$ on $\mathbb{P}^1$, the mutual energy of $\mu$ and $\nu$ is
\[(\mu,\nu):=-\int_{\mathbb{P}^1(\C)\times\mathbb{P}^1(\C)}\log d(x,y) \, \dd\mu(x)\dd\nu(y)\in \R_+\cup\{+\infty\}.\]
The mutual energy of $\mu$ and $\nu$ is symmetric and it is finite when $\mu$ and $\nu$ have continuous potential, or when $\mu$ has continuous potential and $\nu$ is a finite sum of atoms. We extend this energy by bilinearity to signed measures of finite mass. The idea to use the mutual energy to quantify convergence of measures has been introduced by Favre and Rivera-Letelier~\cite{FRL} (see also \cite{oberly} for different arithmetic applications).

The next lemma follows easily from the work of Favre and Rivera-Letelier~\cite{FRL}:

\begin{lemma}[Favre-Rivera-Letelier]\label{lm:FRL}
Let $\rho = \dd\dd^c u$ be a signed measure of mass $0$, i.e. such that $\rho(\mathbb{P}^1(\mathbb{C}))=0$. Assume $u \in \mathscr{C}^0 (\mathbb{P}^1(\C), \R)$. Then $u \in W^{1,2}(\mathbb{P}^1(\mathbb{C}))$ and we have
 \begin{equation*}
(\rho,\rho)  = \int_{\mathbb{P}^1(\C)} \dd u \wedge \dd^c u
   \geq 0.
  \end{equation*}
  In addition, $(\rho,\rho) = 0$ if and only if $\rho = 0$.
 \end{lemma}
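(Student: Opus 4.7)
The plan is to reduce $(\rho,\rho)$ to $-\int u\,\dd\rho$ via the Green function formalism of $\p^1(\C)$, and then to interpret this quantity as a Dirichlet energy through integration by parts after regularization of $u$.

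The first step uses the pointwise identity $\dd\dd^c_y\bigl[-\log d(x,y)\bigr]= \omega-\delta_x$ on $\p^1(\C)$. Continuity of $u$ forces $\rho=\dd\dd^cu$ to carry no atoms, since a positive atom in $\rho$ at a point $x_0$ would force $u$ to have a logarithmic singularity at $x_0$. Define
\[U(y) := \int_{\p^1(\C)} -\log d(x,y)\,\dd\rho(x),\]
which is then well defined and continuous on $\p^1(\C)$. Applying $\dd\dd^c_y$ under the integral and using $\rho(\p^1(\C))=0$ yields $\dd\dd^c U = -\rho = -\dd\dd^cu$. Hence $U+u$ is continuous and harmonic on the compact manifold $\p^1(\C)$, and therefore constant; writing $U=-u+c$ and integrating against $\rho$ (of zero mass) gives
\[(\rho,\rho)=\int U\,\dd\rho = -\int u\,\dd\rho,\]
a finite quantity since $u$ is bounded and $|\rho|$ has finite total variation.

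The second step establishes $-\int u\,\dd\rho = \int \dd u\wedge \dd^cu$ and the membership $u\in W^{1,2}$ via regularization. Take smooth $u_\varepsilon\to u$ uniformly (for instance via a partition of unity and convolution in local charts), so that $\rho_\varepsilon:=\dd\dd^c u_\varepsilon$ are smooth signed measures of zero mass with uniformly bounded total variation. For smooth functions, Stokes' theorem gives
\[\int \dd u_\varepsilon\wedge \dd^c u_\varepsilon = -\int u_\varepsilon\,\dd\rho_\varepsilon \geq 0,\]
and the right-hand side converges to $-\int u\,\dd\rho=(\rho,\rho)$ by combining uniform convergence of $u_\varepsilon$ with the uniform total-variation bound on $\rho_\varepsilon$. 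Thus $\|\nabla u_\varepsilon\|_{L^2}^2$ remains bounded, a weak $L^2$ limit of a subsequence of $\nabla u_\varepsilon$ agrees with the distributional gradient of $u$, and so $u\in W^{1,2}(\p^1(\C))$. Integration by parts for $W^{1,2}$ functions paired against a signed measure (valid since $u$ is continuous and $|\rho|$ has finite mass) then yields $\int \dd u\wedge \dd^cu = -\int u\,\dd\rho = (\rho,\rho)\geq 0$. Finally, $(\rho,\rho)=0$ forces $\|\nabla u\|_{L^2}=0$, so $u$ is constant by continuity, and $\rho=\dd\dd^cu=0$.

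The main obstacle is the global mollification step on the manifold $\p^1(\C)$: one must choose $u_\varepsilon$ so that $u_\varepsilon\to u$ uniformly while keeping $|\rho_\varepsilon|(\p^1(\C))$ uniformly bounded. This can be achieved via a partition of unity subordinate to two charts covering $\p^1(\C)$ together with ordinary convolution, exploiting that differentiation commutes with convolution and that the total variation of the mollification of a signed measure is bounded by that of the original.
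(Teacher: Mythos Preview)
Your route differs from the paper's: rather than arguing from scratch, the paper cites \cite[Proposition~2.6]{FRL} (where the result is proved for the Euclidean kernel $\log|x-y|$) and then checks that the spherical pairing $(\cdot,\cdot)$ and the Euclidean one $(\cdot,\cdot)_{\mathrm{FRL}}$ agree on zero-mass signed measures, via the identity $(\mu,\nu)=(\mu,\nu)_{\mathrm{FRL}}-(\omega,\mu)_{\mathrm{FRL}}-(\omega,\nu)_{\mathrm{FRL}}$, whose extra terms cancel in $(\mu-\nu,\mu-\nu)$.

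Your direct argument has a genuine gap in the mollification step. With a partition of unity you form $u_\varepsilon=\sum_i(\chi_i u)*\phi_\varepsilon$ and assert that $\rho_\varepsilon=\dd\dd^c u_\varepsilon$ has uniformly bounded total variation because ``the total variation of the mollification of a signed measure is bounded by that of the original.'' But in each chart $\dd\dd^c\big((\chi_i u)*\phi_\varepsilon\big)=\big(\dd\dd^c(\chi_i u)\big)*\phi_\varepsilon$, and
\[
\dd\dd^c(\chi_i u)=\chi_i\,\rho+u\,\dd\dd^c\chi_i+\big(d\chi_i\wedge d^cu+du\wedge d^c\chi_i\big).
\]
The bracketed cross term involves first derivatives of $u$ and is only an order-one distribution, not a signed measure of finite mass; controlling $\nabla u$ is exactly what you are trying to prove, so this is circular. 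Since the convolutions are performed in \emph{different} charts, these cross terms do not cancel after mollification, and no total-variation bound follows. A clean repair is to regularize intrinsically, e.g.\ by the heat flow $u_t=e^{t\Delta}u$ on the round sphere (or by averaging over $SU(2)$): then $u_t\to u$ uniformly, $\dd\dd^c u_t=e^{t\Delta}\rho$, and positivity of the heat kernel gives $|e^{t\Delta}\rho|(\p^1(\C))\le|\rho|(\p^1(\C))$.

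Step~1 also needs more care. Absence of atoms does not imply that $U(y)=\int(-\log d(x,y))\,\dd\rho(x)$ is finite, let alone continuous: a non-atomic positive measure can have infinite logarithmic potential at a point. One can recover $U=c-u$ a.e.\ by observing that the potentials of $\rho^\pm$ lie in $L^1$ and invoking Weyl's lemma, but the Fubini step $(\rho,\rho)=\int U\,\dd\rho$ still requires the Jordan components $\rho^\pm$ to have finite energy, which does not follow from continuity of $u$ alone. In the paper's application one has $\rho=\mu-\nu$ with $\mu,\nu$ probability measures each possessing a continuous potential, which does force all the relevant energies to be finite; you should either work under that stronger hypothesis, or bypass the issue by carrying out both identifications at the regularized level and passing to the limit at the end.
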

\begin{proof}
The result is proved in \cite[Proposition 2.6]{FRL} for another choice of mutual energy pairing. So we just need to show that, in this case, they coincide.

Favre and Rivera-Letelier's definition of the mutual energy is
\[(\mu,\nu)_{\mathrm{FRL}}:=-\int_{\C\times\C}\log |x-y| \, \dd\mu(x)\dd\nu(y).\]
Note that $\int_{\mathbb{P}^1(\C)}\log |x-y| \dd\nu(y)$ is the logarithmic potential of $\nu$. In particular, when $\nu=\omega$, for any $x\in \C$ we have:
 \[ \int_{\C}\log |x-y| \dd\omega(y) = \frac{1}{2}\log (1+|x|^2). \]
For any probability measures $\mu$ and $\nu$ with continuous potentials, we claim 
 \[(\mu,\nu)=(\mu,\nu)_{\mathrm{FRL}}-(\omega,\nu)_{\mathrm{FRL}}-(\omega,\mu)_{\mathrm{FRL}}.\]
Indeed, by Fubini and $\log d(x,y)= \log |x-y| -  \frac{1}{2}\log (1+|x|^2)- \frac{1}{2}\log (1+|y|^2)$:
\begin{align*}
	(\mu,\nu)&= (\mu,\nu)_{\mathrm{FRL}} + \frac{1}{2}\int_{\C^2} \log (1+|x|^2)\dd\mu(x)\dd\nu(y) +\frac{1}{2}\int_{\C^2} \log (1+|y|^2) \dd\mu(x)\dd\nu(y) \\
	        &= (\mu,\nu)_{\mathrm{FRL}}+  \frac{1}{2}\int_{\C} \log (1+|x|^2) \dd\mu(x) +\frac{1}{2}\int_{\C} \log (1+|y|^2)\dd\nu(y)\\
	        &= (\mu,\nu)_{\mathrm{FRL}}+  \int_{\C^2} \log |x-y| \dd\mu(x)  \dd\omega(y)+ \int_{\C^2} \log |x-y| \dd\omega(x)  \dd\nu(y)\\
	        & =(\mu,\nu)_{\mathrm{FRL}}-(\omega,\nu)_{\mathrm{FRL}}-(\omega,\mu)_{\mathrm{FRL}}.
\end{align*}
By bilinearity of the mutual energy, we have
 \[(\mu-\nu,\mu-\nu)=(\mu-\nu,\mu-\nu)_{\mathrm{FRL}}=\int_{\mathbb{P}^1(\C)}\dd u\wedge \dd^cu,\]
 where $\mu-\nu=\dd\dd^cu$. Thus Lemma~\ref{lm:FRL} holds.
 \end{proof}


\subsection{The proof of Proposition~\ref{tm:speed-FRL}}

Write $F=\{z_1,\ldots,z_n\}$. Write $\nu_n:=\frac1{n}\sum_i\delta_{z_i}$ and fix $\varepsilon > 0$. Denote by $\mu_{i, \varepsilon}$ the uniform measure on $\partial B(z_i, \varepsilon)$, where $B(z_i,\varepsilon)$ is the ball for the spherical distance centered at $z_i$ and of radius $\varepsilon$, and by $\nu_{n, \varepsilon}$ the average measure $\frac1{n}\sum_i\mu_{i, \varepsilon}$ (see above for the construction of such measures). Then
\begin{align*}
  \left| \int_{\mathbb{P}^1(\C)} \varphi\, \dd \mu - \int_{\mathbb{P}^1(\C)} \varphi \,\dd \nu_n \right| 
  \leq & \left| \int_{\mathbb{P}^1(\C)}\varphi \,\dd \mu - \int_{\mathbb{P}^1(\C)} \varphi \,\dd \nu_{n, \varepsilon} \right|\\
&  \quad + \left| \int_{\mathbb{P}^1(\C)}\varphi \,\dd \nu_{n, \varepsilon} - \int_{\mathbb{P}^1(\C)}\varphi \,\dd \nu_n \right|.
 \end{align*}
  The latter term of the right hand-side is at most $\eta_\varphi(\varepsilon)$. Pick $g_{n,\varepsilon}\in \mathscr{C}^0(\mathbb{P}^1(\C),\R)$ such that $\nu_{n,\varepsilon}-\omega=\dd\dd^cg_{n,\varepsilon}$. Then, by Stokes formula
  \begin{equation*}
  \int_{\mathbb{P}^1(\C)} \varphi \, \dd \mu - \int_{\mathbb{P}^1(\C)} \varphi \, \dd \nu_{n, \varepsilon} 
  = \int_{\mathbb{P}^1(\C)} \varphi \, \dd \dd^c (g-g_{n,\varepsilon}) 
  = - \int_{\mathbb{P}^1(\C)} \dd \varphi \wedge \dd^c (g-g_{n,\varepsilon}).
 \end{equation*}
 Lemma~\ref{lm:FRL} and Cauchy-Schwartz inequality thus imply
\begin{align*}
  \left| \int_{\mathbb{P}^1(\C)} \varphi \,\dd \mu - \int_{\mathbb{P}^1(\C)} \varphi \,\dd \nu_{n, \varepsilon} \right|^2 
  & = \left| \int_{\mathbb{P}^1(\C)} \dd \varphi \wedge \dd^c (g-g_{n, \varepsilon}) \right|^2 \\
  & \leq \left(\int_{\mathbb{P}^1(\C)} \dd \varphi \wedge \dd^c \varphi\right)\times \left(\int_{\mathbb{P}^1(\mathbb{C})}\dd(g-g_{n, \varepsilon} )\wedge \dd^c(g-g_{n, \varepsilon})\right).
 \end{align*}
We are left with estimating the last integral which is $(\mu-\nu_{n, \varepsilon},\mu-\nu_{n, \varepsilon})$:
 \begin{align*}
 (\mu-\nu_{n, \varepsilon},\mu-\nu_{n, \varepsilon})
  & = (\mu,\mu)+(\nu_{n,\varepsilon},\nu_{n,\varepsilon})-2 (\mu,\nu_{n, \varepsilon}).
 \end{align*}
 We now use the following.
 \begin{claim}
$\displaystyle (\nu_{n, \varepsilon},\nu_{n, \varepsilon}) 
  \leq  -\frac{2}{n(n-1)}\sum_{1 \leq i < j \leq n} \log d(z_i,z_j) -\frac{1}{n}\log(\varepsilon)+2\sqrt{\varepsilon}$.
 \end{claim}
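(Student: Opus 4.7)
The plan is to expand the quadratic form $(\nu_{n,\varepsilon},\nu_{n,\varepsilon})$ by bilinearity into diagonal and off-diagonal contributions
\[
(\nu_{n,\varepsilon},\nu_{n,\varepsilon})=-\frac{1}{n^{2}}\sum_{i=1}^{n}I_{i}(\varepsilon)-\frac{1}{n^{2}}\sum_{i\neq j}J_{ij}(\varepsilon),
\]
where $I_i(\varepsilon):=\iint\log d(x,y)\,\dd\mu_{i,\varepsilon}(x)\dd\mu_{i,\varepsilon}(y)$ and $J_{ij}(\varepsilon):=\iint\log d(x,y)\,\dd\mu_{i,\varepsilon}(x)\dd\mu_{j,\varepsilon}(y)$, and to control each piece via classical potential theory on the Riemann sphere.

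Both computations reduce to a single key estimate that I would establish first. Using a spherical isometry (in $PSU(2)$) that sends the reference point $z_0$ to the origin of an affine chart, the measure $\mu_{z_0,\varepsilon}$ becomes the uniform measure on the Euclidean circle $\{|w|=r\}$ with $r:=\varepsilon/\sqrt{1-\varepsilon^{2}}$, since spherical circles are preserved by isometries. Using the identity $\log d(x,y)=\log|x-y|-\tfrac{1}{2}\log(1+|x|^{2})-\tfrac{1}{2}\log(1+|y|^{2})$ together with Jensen's mean-value formula $\int_{|w|=r}\log|z-w|\,\tfrac{\dd\theta}{2\pi}=\max(\log|z|,\log r)$, and tracking the Fubini-Study factors through the identity $1+r^{2}=1/(1-\varepsilon^{2})$, one obtains that the potential $U_{\mu_{z_0,\varepsilon}}(x):=\int\log d(x,y)\,\dd\mu_{z_0,\varepsilon}(y)$ depends on $x$ only through $s:=d(x,z_0)$, and satisfies
\[
U_{\mu_{z_0,\varepsilon}}(x)\;\geq\;\log d(x,z_{0})+\tfrac{1}{2}\log(1-\varepsilon^{2}),
\]
with equality when $s\geq\varepsilon$ (the case $s<\varepsilon$ follows from $\log(\varepsilon/s)>0\geq\tfrac{1}{2}\log\tfrac{1-\varepsilon^{2}}{1-s^{2}}$). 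Evaluated on the circle itself (where $s=\varepsilon$) this yields the explicit self-energy $I_{i}(\varepsilon)=\log\varepsilon+\tfrac{1}{2}\log(1-\varepsilon^{2})$, and applying the bound successively in each variable gives $J_{ij}(\varepsilon)\geq\log d(z_{i},z_{j})+\log(1-\varepsilon^{2})$.

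Substituting these into the decomposition and collecting terms produces
\[
(\nu_{n,\varepsilon},\nu_{n,\varepsilon})\;\leq\;-\frac{\log\varepsilon}{n}-\frac{2}{n^{2}}\sum_{i<j}\log d(z_{i},z_{j})-\frac{2n-1}{2n}\log(1-\varepsilon^{2}).
\]
To finish, I would observe that $\log d(z_i,z_j)\leq 0$, so the coefficient $2/n^{2}$ may be weakened to $2/(n(n-1))$ without reversing the inequality, and that the remainder term is controlled by $-\log(1-\varepsilon^{2})\leq 2\varepsilon^{2}\leq 2\sqrt{\varepsilon}$ for $\varepsilon$ small enough, which is the only regime of interest since $\varepsilon$ is taken to $0$ at the end of the proof of Proposition~\ref{tm:speed-FRL}. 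The whole argument is elementary; the only genuinely non-routine step is the verification of the key pointwise bound on $U_{\mu_{z_0,\varepsilon}}$, and once the Fubini-Study factors are carefully tracked the rest is bookkeeping.
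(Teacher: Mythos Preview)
Your argument is correct, and in fact cleaner and sharper than the paper's own proof. Both proofs begin with the same bilinear expansion into diagonal and off-diagonal pairings $(\mu_{i,\varepsilon},\mu_{j,\varepsilon})$, but from there they diverge. The paper treats the diagonal terms via the crude comparison $\log d(x,y)\geq 2\log|x-y|$ near the origin (which yields only $(\mu_{i,\varepsilon},\mu_{i,\varepsilon})\leq -2\log\varepsilon$, hence a $-\tfrac{2}{n}\log\varepsilon$ contribution rather than the $-\tfrac{1}{n}\log\varepsilon$ stated in the Claim), and handles the off-diagonal terms by a case split according to whether $d(z_i,z_j)\geq\sqrt{\varepsilon}$ or not, combining a triangle-inequality estimate in the far case with the submean inequality for the Euclidean logarithmic potential in the near case; this is where the $2\sqrt{\varepsilon}$ error originates. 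You instead compute the spherical potential $U_{\mu_{z_0,\varepsilon}}$ exactly via Jensen's formula and the identity $1+r^2=1/(1-\varepsilon^2)$, obtaining the single uniform bound $U_{\mu_{z_0,\varepsilon}}(x)\geq\log d(x,z_0)+\tfrac{1}{2}\log(1-\varepsilon^2)$ with equality outside the $\varepsilon$-ball. Applying this once gives the exact diagonal value, and applying it twice gives the off-diagonal lower bound, with no case analysis. Your approach therefore recovers the Claim exactly as written (with $-\tfrac{1}{n}\log\varepsilon$) and produces an error term of order $\varepsilon^2$ rather than $\sqrt{\varepsilon}$; the paper's cruder route is perhaps more robust to perturbation but loses a factor of $2$ in the leading term and a power of $\varepsilon$ in the remainder.
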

Taking the claim for granted, we finish the proof of Proposition~\ref{tm:speed-FRL}.
 Since for any $y\in \mathbb{P}^1(\C)$ we have $\int_{\mathbb{P}^1(\C)} \log d(x,y)d\mu(x)= g(y)+c$ where $c$ is a constant and since $g$ is a $\alpha$-H\"older continuous function, we have by Fubini 
 \begin{align*}
-(\mu,\nu_{n, \varepsilon}) &= -(\mu,\nu_{n, \varepsilon}-\nu_n) -(\mu,\nu_n)\\
& \leq \|g\|_{\mathscr{C}^{0,\alpha}}\varepsilon^\alpha-(\mu,\nu_n).
 \end{align*}
Together with the Claim, this summarizes as
  \begin{align}
 \label{first-ineg}(\mu-\nu_{n, \varepsilon},\mu-\nu_{n, \varepsilon})
  \leq (\mu,\mu)& +2\sqrt{\varepsilon}+2\|g\|_{\mathscr{C}^{0,\alpha}}\varepsilon^\alpha-\frac{2}{n}\log(\varepsilon)-2(\mu,\nu_n)\\  
  & -\frac{2}{n(n-1)}\sum_{1\leq i<j\leq n}\log d(z_i,z_j).\nonumber
 \end{align}
We now use that $g$ is the good potential of $\mu$ to see that
\[(\mu,\mu)=-\int_{\mathbb{P}^1(\C)\times\mathbb{P}^1(\C)}\log d(x,y)\, \dd\mu(x)\dd\mu(y)=-2\int_{\mathbb{P}^1(\C)}g\, \dd\mu.\]
Similarly, we can compute
\begin{align*}
	(\mu,\nu_n)&=-\int_{\mathbb{P}^1(\C)\times\mathbb{P}^1(\C)}\log d(x,y)\, \dd\nu_n(x)\dd\mu(y)\\
	           &=\int_{\mathbb{P}^1(\C)\times\mathbb{P}^1(\C)} - \Phi_g\, \dd\nu_n(x)\dd\mu(y) - \int_{\mathbb{P}^1(\C)\times\mathbb{P}^1(\C)} (g(x)+g(y)) \dd\nu_n(x)\dd\mu(y)\\
	        	           &=-\frac{1}{n}\sum_{i=1}^ng(z_i)-\int_{\mathbb{P}^1(\C)}g\, \dd\mu.
\end{align*}
Combining those two formulas with \eqref{first-ineg} gives
  \begin{align*}
 (\mu-\nu_{n, \varepsilon},\mu-\nu_{n, \varepsilon})
&  \leq  2\|g\|_{\mathscr{C}^{0,\alpha}}\varepsilon^\alpha+2\sqrt{\varepsilon}\\
  & \quad-\frac{2}{n}\log(\varepsilon)+\frac{2}{n}\sum_{j=1}^ng(z_j)-\frac{2}{n(n-1)}\sum_{1\leq i<j\leq n}\log d(z_i,z_j).
 \end{align*}
The conclusion follows picking $\varepsilon:=n^{-1/{\min(\alpha, 1/2)}}$ and using the definition of $\Phi_g$.

~

We are left with proving the claim:
\begin{proof}[Proof of the Claim]
 By definition and bilinearity of the mutual energy, we have
 \begin{align*}
 (\nu_{n, \varepsilon},\nu_{n, \varepsilon}) 
  &  = \frac{1}{n^2} \sum_{i=1}^n (\mu_{i, \varepsilon} ,\mu_{i, \varepsilon})  + \frac{2}{n^2} \sum_{1 \leq i < j \leq n} (\mu_{i, \varepsilon},\mu_{j, \varepsilon}).
 \end{align*}
 For a given $j$, we look at
 \[(\mu_{j, \varepsilon},\mu_{j, \varepsilon})= - \int_{\mathbb{P}^1(\C)\times\mathbb{P}^1(\C)}  \log d(x,y) \dd \mu_{i, \varepsilon}(x) \dd \mu_{i, \varepsilon}(y). \]
 Composing by an isometry of the spherical distance, we can assume that $z_i=0$. 
 We can assume that $\varepsilon$ is small enough so that $\log d(x,y)\geq 2\log |x-y|$ for $x,y \in B(0,\varepsilon)$. In particular
  \begin{align*}(\mu_{j, \varepsilon},\mu_{j, \varepsilon})&= - \int_{\mathbb{P}^1(\C)\times\mathbb{P}^1(\C)}  \log d(x,y) \dd \lambda_{\frac{\varepsilon}{\sqrt{1-\varepsilon^2}}}(x)\dd \lambda_{\frac{\varepsilon}{\sqrt{1-\varepsilon^2}}}(y)\\
  	&\leq  - 2\int_{\mathbb{P}^1(\C)\times\mathbb{P}^1(\C)}  \log |x-y| \dd \lambda_{\frac{\varepsilon}{\sqrt{1-\varepsilon^2}}}(x)\dd \lambda_{\frac{\varepsilon}{\sqrt{1-\varepsilon^2}}}(y).
  \end{align*}
  Now, $G_\varepsilon(y):=\int_{\mathbb{P}^1(\C)}  \log |x-y| \dd \lambda_{\frac{\varepsilon}{\sqrt{1-\varepsilon^2}}}(x)$ is the logarithmic potential of $\lambda_{\frac{\varepsilon}{\sqrt{1-\varepsilon^2}}}$ which is equal to $\log\left(\frac{\varepsilon}{\sqrt{1-\varepsilon^2}}\right)$ on the spherical ball $B(0,\varepsilon)$. In particular
    \begin{align*}(\mu_{j, \varepsilon},\mu_{j, \varepsilon}) 	&\leq  - 2\int_{\mathbb{P}^1(\C)}  \log\left(\frac{\varepsilon}{\sqrt{1-\varepsilon^2}}\right)\dd \lambda_{\frac{\varepsilon}{\sqrt{1-\varepsilon^2}}}(y) \leq -2\log \varepsilon.
  \end{align*}
Pick now a term with $i\neq j$. Assume first that $ d(z_i,z_j)\geq \sqrt{\varepsilon}$ so, for $x,y \in \partial B(z_i,\varepsilon) \times \partial B(z_j,\varepsilon)$ we have
$|d(x,y)-d(z_i,z_j)|< d(z_i,z_j)$ (provided $\varepsilon$ is small enough) and $|d(x,y)-d(z_i,z_j)|\leq 2 \varepsilon$ from the triangle inequality. From $\log (1+t) \leq t$ we deduce
\begin{align*}
	\log |d(x,y)|&= \log |d(z_i,z_j)+d(x,y)-d(z_i,z_j)|\\
	&\leq  \log |d(z_i,z_j)| + \log (1 + \left|\frac{ d(x,y)-d(z_i,z_j)}{d(z_i,z_j)}\right|) \\
	&\leq \log |d(z_i,z_j)| + \left|\frac{ d(x,y)-d(z_i,z_j)}{d(z_i,z_j)}\right|
\end{align*}
so
\begin{equation}\label{eq_cas_loin}\log |d(x,y)|\leq \log |d(z_i,z_j)|+ 2\sqrt{\varepsilon}. 
\end{equation}
Suppose now that $ d(z_i,z_j)\leq \sqrt{\varepsilon}$. 
Again, composing by an isometry of the spherical distance, we can assume that $z_i=0$.
The spherical balls $B(0, \varepsilon)$ and $B(z_j,\varepsilon)$ are contained in the (euclidean) disk of center $0$ and radius $2\sqrt{\varepsilon}$. So for $x,y \in \partial B(0,\varepsilon) \times \partial B(z_j,\varepsilon)$, by the inequality $\log(1 +t) \leq t$.
 \begin{align*} -\log |d(x,y)|&= -\log |x-y|+\frac{1}{2}\log(1+|x|^2)+ \frac{1}{2}\log(1+|y|^2) \\
 	                          &\leq -\log |x-y|+ 4 \varepsilon 
 \end{align*}
 In particular
 \begin{align*}
 (\mu_{i,\varepsilon},\mu_{j,\varepsilon})\leq -\int_{\mathbb{P}^1(\C)\times\mathbb{P}^1(\C)}  \log |x-y| \dd \mu_{i, \varepsilon}(x) \dd \mu_{j, \varepsilon}(y)+4\varepsilon. 
 \end{align*}
 where  $\dd \mu_{i, \varepsilon}$ is the normalized Lebesgue measure $\lambda_{\frac{\varepsilon}{\sqrt{1-\varepsilon^2}}}$ on the circle of center $0$ and radius $\frac{\varepsilon}{\sqrt{1-\varepsilon^2}}$.
As before $G_\varepsilon(y):=\int_{\mathbb{P}^1(\C)}  \log |x-y| \dd \lambda_{\frac{\varepsilon}{\sqrt{1-\varepsilon^2}}}(x)$ is the logarithmic potential of $\lambda_{\frac{\varepsilon}{\sqrt{1-\varepsilon^2}}}$ which satisfies  $G_\varepsilon(y)\geq \log|y|$. By the submean inequality, we deduce
 \begin{align*}
	(\mu_{i,\varepsilon},\mu_{j,\varepsilon})\leq - \int_{\mathbb{P}^1(\C)} G_\varepsilon(y) \dd \mu_{j, \varepsilon}(y)+4\varepsilon \leq -G(z_j)+4\varepsilon\leq -\log|z_j|+4\varepsilon. 
\end{align*}
Since $d(0,z_j)\leq |z_j|$, we infer $(\mu_{i,\varepsilon},\mu_{j,\varepsilon})\leq  -\log d(z_i,z_j)+4\varepsilon$. Combining this inequality and \eqref{eq_cas_loin}, we deduce that for every $i\neq j$, we have (again taking $\varepsilon$ small enough)
 \[  (\mu_{i,\varepsilon},\mu_{j,\varepsilon})\leq -\log d(z_i,z_j)+2\sqrt{\varepsilon}.\]
We have $(\mu_{i,\varepsilon},\mu_{j,\varepsilon})\geq0$, so summing the above
\begin{align*}
 \frac{2}{n^2} \sum_{1 \leq i < j \leq n} (\mu_{i, \varepsilon},\mu_{j, \varepsilon}) & \leq \frac{2}{n(n-1)}\sum_{1 \leq i < j \leq n} (\mu_{i, \varepsilon},\mu_{j, \varepsilon})\\
 &\leq -\frac{2}{n(n-1)}\sum_{1 \leq i < j \leq n} \log d(z_i,z_j) +2\sqrt{\varepsilon}.
 \end{align*}
 This concludes the proof.
 \end{proof}

\section{Galois invariant finite sets of preperiodic points are quasi-Fekete}\label{sec:Main}
 
Let $f:\p^1\to\p^1$ be a complex rational map of degree $d\geq2$ and let $F$ be its good lift, i.e. the lift with $\mathrm{Res}(F)$ and let $a_0,\ldots,a_d,b_0,\ldots,b_d$ be its coefficients. Let $K:=\mathbb{Q}(a_0,\ldots,a_d,b_0,\ldots,b_d)$. The field $K$ is finitely generated over $\mathbb{Q}$.

The main result of this section is the following.
\begin{theorem}\label{tm:Gal-Fekete}
There is a constant $C>0$ depending only on $f$ such that any finite set $E\subset \p^1(\C)$ of pre-periodic points of $f$ which is invariant by $\mathrm{Gal}(\bar{K}/K)$ is quasi-$g_f$-Fekete with constant $C$. More precisely, we have
\[-C\# E\log(\# E)\leq \sum_{z\neq w\in E}\Phi_{g_f}(z,w)\leq C\# E\log(\# E).\]
\end{theorem}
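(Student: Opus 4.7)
The plan is to establish the two-sided bound via a global product-formula identity, reducing the archimedean energy sum to sums at the non-archimedean places of $K$, where Baker's estimate (Lemma~\ref{lm:BakerHsia}) gives the uniform control we need.

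To set things up, I take $F$ to be the good lift of $f$ (so $\mathrm{Res}(F)=1$) and choose for each $z_i\in E$ a lift $Z_i=(Z_{i,1},Z_{i,2})\in\bar{K}^2\setminus\{0\}$. Because $E$ is $\mathrm{Gal}(\bar{K}/K)$-invariant, one may normalize these lifts so that the polynomial $P(X,Y):=\prod_i(Z_{i,2}X-Z_{i,1}Y)$ lies in $K[X,Y]$; then $D:=\prod_{i\neq j}(Z_i\wedge Z_j)$ is, up to sign, $\mathrm{disc}(P)$, hence in $K^\ast$. Using $g_{f,v}(z)=G_{F,v}(Z)-\log\|Z\|_v$ together with the vanishing at every place $v$ of the resultant-correction in \eqref{eq-samelift} (since $\mathrm{Res}(F)=1$), the basic algebraic identity reads, for every $v$,
\[
\sum_{i\neq j}\Phi_{g_{f,v}}(z_i,z_j)=\log|D|_v-2(n-1)\sum_{i=1}^{n}G_{F,v}(Z_i).
\]

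Next I would invoke the appropriate product formula. When $K$ is a number field, the classical product formula gives $\sum_v n_v\log|D|_v=0$ for the first term; for the second, preperiodicity of each $z_i$ forces $\hat{h}_f(z_i)=0$, and combined with the product formula applied to the lifts this yields $\sum_v n_v G_{F,v}(Z_i)=0$. When $K$ has positive transcendence degree over $\mathbb{Q}$, I instead realize $K=\mathbb{Q}(\mathcal{B})$ for a projective arithmetic variety $\mathcal{B}$ flat over $\mathrm{Spec}(\mathbb{Z})$, equip $\mathcal{B}$ with a continuous hermitian line bundle $\overline{\mathscr{L}}$ adapted to $F$ and to the lifts $Z_i$, and read off from Moriwaki's generalized product formula the same two vanishing identities. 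In either case one concludes
\[
\sum_v n_v\sum_{i\neq j}\Phi_{g_{f,v}}(z_i,z_j)=0.
\]

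The rest is driven by Baker's estimate. Lemma~\ref{lm:BakerHsia} provides at every $v$ a bound $\sum_{i\neq j}\Phi_{g_{f,v}}(z_i,z_j)\leq C_v\cdot n\log n$, where $C_v>0$ only for the finitely many places at which $v$ is archimedean or some coefficient of $F$ satisfies $|\cdot|_v>1$. Applied at the archimedean place this yields directly the upper bound of Theorem~\ref{tm:Gal-Fekete}; isolating the archimedean term in the global identity and replacing each remaining contribution by its Baker upper bound produces the matching lower bound with $C:=\sum_{v\neq\infty}n_v C_v$. The step I expect to be the main obstacle is the transcendence case: the natural hermitian metrization $\overline{\mathscr{L}}$ genuinely depends on $E$ (it must encode the lifts $Z_i$ and consequently the coefficients of $P$), and one has to verify that the uniform dependence of Baker's constant on the coefficients of $F$ absorbs this $E$-dependence, so that the resulting $C$ is independent of the chosen Galois-invariant set $E$.
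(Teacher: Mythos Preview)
Your framework matches the paper's: upper bound from Baker directly, lower bound by a product formula that trades the archimedean energy for the non-archimedean ones, then Baker again. In the number field case your argument is essentially complete; the only cosmetic difference is that the paper chooses \emph{preperiodic} lifts $Z_i$ (i.e.\ $F^{\circ n}(Z_i)=F^{\circ m}(Z_i)$), which forces $G_{F,w}(Z_i)=0$ at every place and so kills the second term of your identity placewise, whereas you kill it only after summing via $\hat h_f(z_i)=0$. Both work over a number field.

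The real gap is in the transcendence case, and it is precisely the step you flag. In Moriwaki's product formula the archimedean contribution is an \emph{integral} $\int_{\mathcal{B}(\C)}\log|D(b)|\,d\mu_{\bar{\mathscr{L}}}(b)$, not the point value at the distinguished $b_0\in\mathcal{B}^{\mathrm{gen}}(\C)$ corresponding to your embedding $K\hookrightarrow\C$; so the vanishing identity you obtain does not, by itself, say anything about $\sum_{i\neq j}\Phi_{g_f,b_0}(z_i,z_j)$. The paper resolves this with two coupled ideas you are missing. First, with preperiodic lifts the energy at an archimedean place $b$ is exactly $\log|\Delta_E(b)|$, which is \emph{pluriharmonic} on $\mathcal{B}(\C)\setminus\mathrm{supp}(\mathrm{div}(\Delta_E))$; your lifts only give $\log|D(b)|-2(n-1)\sum_iG_{F,b}(Z_i)$, whose second term is merely plurisubharmonic. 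Second, the paper chooses the hermitian metric so that $\mu_{\bar{\mathscr{L}}}$ is the Lebesgue measure on the Shilov boundary of a small polydisk centered at $b_0$ and avoiding the divisor; the mean-value equality for pluriharmonic functions then converts the integral into $\deg_{\mathscr{L}}(\mathcal{B})\cdot\log|\Delta_E(b_0)|$, which is exactly the quantity you want. Finally, the reason the resulting constant is independent of $E$ is not that Baker's bound ``absorbs'' the metric dependence, but that the non-archimedean weights satisfy the metric-independent identity $\sum_{\Gamma\in\mathcal{B}^{(1)}_p}a_\Gamma=\deg_{\mathscr{L}}(\mathcal{B})$ (equation~\eqref{eq:finiteplace}), so after normalizing by $\deg_{\mathscr{L}}(\mathcal{B})$ the only surviving data are the finitely many values $\log^+\max\{|a_i|_\Gamma,|b_j|_\Gamma\}$, which depend only on $f$.
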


Note that the right-hand side inequality is just an immediate application of Lemma~\ref{lm:BakerHsia}. We thus focus on proving the other inequality in this section.

 \subsection{Generalized product formula}
 \subsubsection{The case of number fields: the product formula}
 Let $\mathbb{K}$ be a number field and let $M_\mathbb{K}$ be the set of places of $\mathbb{K}$, i.e. the set of equivalence classes of absolute values. For each $v\in M_\mathbb{K}$, we let $|\cdot|_v$ be an absolute value, normalized so that we have the product formula
\begin{align*}
\sum_{v\in M_\mathbb{K}}N_v\log|x|_v=0, \quad x\in \mathbb{K}^\times,
\end{align*}
where $N_v=[\mathbb{K}_v:\mathbb{Q}_p]$ with $\mathbb{K}_v$ the completion of $(\mathbb{K},|\cdot|_v)$, and $p$ is the residual characteristic of $(\mathbb{K},|\cdot|_v)$, when $v$ is non-archimedean and where $N_v=[\mathbb{C}:\mathbb{R}]=2$ or $N_v=[\mathbb{R}:\mathbb{R}]=1$ when $v$ is archimedean. We let $M_\mathbb{K}^\infty$ denote the set of archimedean places.

\medskip

When $\mathbb{L}$ is a finite extension of $\mathbb{K}$, any absolute value $|\cdot|_v$ on $\mathbb{K}$ extends to $\mathbb{L}$, but the extension is a priori not unique. If $|\cdot|_w$ is an absolute value on $\mathbb{L}$ which restricts to $\mathbb{K}$ as $|\cdot|_v$, we use the notation $w|v$. Fix $v\in M_\mathbb{K}$, one has
\[\sum_{w\in M_\mathbb{L}, \ w|v}[\mathbb{L}_w:\mathbb{K}_v]=[\mathbb{L}:\mathbb{K}].\]
The product formula on $\mathbb{L}$ can thus be written
\begin{align}
	\sum_{v\in M_\mathbb{K}}	 N_v \sum_{w\in M_\mathbb{L}, \  w|v} [\mathbb{L}_w:\mathbb{K}_v] \log|x|_w=0, \quad x\in \mathbb{L}^\times.\label{productformulanbfield2}
\end{align}
We refer to \cite{Lang-algebra} for more details.

\subsubsection{Generalized product formula for finitely generated fields}\label{Sec:finitelygenerated}
Let now $K$ be a finitely generated field over $\mathbb{Q}$, i.e. there are $T_1,\ldots,T_N$ such that $K=\mathbb{Q}(T_1,\ldots,T_N)$. Assume $K$ has transcendence degree $\mathrm{trdeg}(K/\mathbb{Q})\geq1$ over $\mathbb{Q}$. Then there is a normal projective variety $\mathcal{B}$, flat over $\mathrm{Spec}(\mathbb{Z})$ with function field $K$ (see,~e.g.,~\cite{Moriwaki}). Let $\mathcal{B}^{(1)}$ be the set of all integral closed sub-schemes $\Gamma\subset\mathcal{B}$ of codimension $1$ in $\mathcal{B}$ (either vertical or horizontal). Let also
	\[\mathcal{B}^{\mathrm{gen}}(\mathbb{C}):= \mathcal{B}(\C)\setminus \bigcup_{\Gamma\in \mathcal{B}^{(1)}}\mathrm{supp}(\Gamma).\] 
For every $\Gamma\in\mathcal{B}^{(1)}$, we can define an absolute value
\[|\cdot|_\Gamma:=\exp(-\mathrm{ord}_\Gamma) :K\longrightarrow\mathbb{R}_+,\]
where $\mathrm{ord}_\Gamma :K^\times\longrightarrow\mathbb{Z}$ is the valuation defined by the order of vanishing
along $\Gamma$. Every non-archimedean absolute value on $K$ is obtained this way. Moreover, for every prime number $p$, the set 
\begin{center}
$\mathcal{B}^{(1)}_p:=\{\Gamma\subset\mathcal{B}^{(1)}$, the residue characteristic of $(K_\Gamma,|\cdot|_\Gamma)$ is $p\}$
\end{center}
is a finite set. Finally, any point $b\in \mathcal{B}^{\mathrm{gen}}(\mathbb{C})$ corresponds to an absolute value $|\cdot|_b$ on $K$ defined by 
\[|u|_b:=|u(b)|_\mathbb{C}, \quad u\in K.\]
Those absolute values are archimedean and all archimedean absolute values on $K$ are obtained this way. Whence, the set of places $M_K$ on $K$ is  
\[M_K=\mathcal{B}^{\mathrm{gen}}(\mathbb{C}) \sqcup \bigsqcup_{\Gamma\in \mathcal{B}^{(1)}}\{|\cdot|_\Gamma\}.\]

We let $\bar{\mathscr{L}}$ be a Hermitian line bundle on $\mathcal{B}$, i.e. a big and nef line bundle $\mathscr{L}$ on $\mathcal{B}$ endowed with a semi-positive and continuous metrization $\|\cdot\|$ on $\mathscr{L}(\mathbb{C})$ such that $\bar{\mathscr{L}}$ is arithmetically nef in the sense of Zhang~\cite{Zhang-positivity}. One can then define a positive measure on $\mathcal{B}(\mathbb{C})$ of mass $\deg_\mathscr{L}(\mathcal{B})$ as
\[\mu_{\bar{\mathscr{L}}}:=c_1(\mathscr{L},\|\cdot\|)^{\wedge \dim \mathcal{B}}.\]
Remark that, by Bedford-Taylor, the measure $\mu_{\bar{\mathscr{L}}}$ does not give mass to closed subvarieties of $\mathcal{B}(\mathbb{C})$.
Fix some prime number $p$. For $\Gamma\in \mathcal{B}^{(1)}$, we define $a_\Gamma$ as the arithmetic intersection number
\[a_\Gamma:=\left(\left(\bar{\mathscr{L}}\right)^{\dim \mathcal{B}}|\Gamma\right)\geq0.\]
Since $\pi:\mathcal{B}\to\mathrm{Spec}(\mathbb{Z})$ is flat, and since the set $\mathcal{B}_p^{(1)}$ coincides with the set of irreducible components of $\pi^{-1}\{p\}$, we necessarily have
\begin{align}
\sum_{\Gamma\in \mathcal{B}^{(1)}_p}a_\Gamma=\deg_{\mathscr{L}}(\mathcal{B}).\label{eq:finiteplace}
\end{align}
In particular, for any choice of hermitian metric on $\mathscr{L}$ for which $\bar{\mathscr{L}}$ is nef, we have
\begin{align*}
0\leq a_\Gamma\leq \deg_{\mathscr{L}}(\mathcal{B}), \quad \text{for any} \ \Gamma\in \mathcal{B}^{(1)}.
\end{align*}
Following~\cite[\S~3.2]{Moriwaki}, we have the following generalization of the product formula:
\begin{lemma}[Generalized product formula]\label{lm:generalized-productformula}
For any $u\in K^\times$, we have
\[\sum_{\Gamma\subset \mathcal{B}^{(1)}}a_\Gamma\log|u|_\Gamma+\int_{\mathcal{B}(\mathbb{C})}\log|u(b)| \, \mathrm{d}\mu_{\bar{\mathscr{L}}}(b)=0~.\]
\end{lemma}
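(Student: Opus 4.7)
The plan is to derive Lemma~\ref{lm:generalized-productformula} from Gillet--Soul\'e arithmetic intersection theory in the form extended by Moriwaki to continuously metrized line bundles on normal arithmetic varieties. First I would fix $u\in K^\times$ and form the principal arithmetic divisor $\widehat{(u)}:=(\mathrm{div}(u),-\log|u|^2)$ in $\widehat{\mathrm{CH}}^1(\mathcal{B})$, where $\mathrm{div}(u)=\sum_{\Gamma\in\mathcal{B}^{(1)}}\mathrm{ord}_\Gamma(u)\,\Gamma$ is the Weil divisor of $u$ and $-\log|u|^2$ is its canonical Green current, satisfying the Poincar\'e--Lelong equation $dd^c[-\tfrac12\log|u|^2]+\delta_{\mathrm{div}(u)}=0$ on $\mathcal{B}(\C)$. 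The essential input is that such a principal arithmetic divisor is trivial in $\widehat{\mathrm{CH}}^1(\mathcal{B})$.

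The rest of the argument is then a one-line arithmetic B\'ezout computation. Pairing $\widehat{(u)}$ with $(\bar{\mathscr{L}})^{\dim\mathcal{B}}$ yields zero; on the other hand, its standard decomposition into finite and archimedean contributions, combined with the projection formula to restrict $(\bar{\mathscr{L}})^{\dim\mathcal{B}}$ to each prime divisor $\Gamma$, gives
\[
0 \;=\; \widehat{(u)}\cdot(\bar{\mathscr{L}})^{\dim\mathcal{B}} \;=\; \sum_{\Gamma\in\mathcal{B}^{(1)}}\mathrm{ord}_\Gamma(u)\,a_\Gamma \;+\; \int_{\mathcal{B}(\C)}\log|u|\,\dd\mu_{\bar{\mathscr{L}}},
\]
where I have absorbed the factor $1/2$ into the convention and used $c_1(\mathscr{L},\|\cdot\|)^{\wedge\dim\mathcal{B}}=\mu_{\bar{\mathscr{L}}}$. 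Since $|u|_\Gamma=\exp(-\mathrm{ord}_\Gamma(u))$ by definition, one has $\mathrm{ord}_\Gamma(u)=-\log|u|_\Gamma$, and the identity rearranges to the statement.

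The two obstacles are technical rather than conceptual. First, $\mathcal{B}$ is only normal (not necessarily smooth) and the metric on $\mathscr{L}$ is only continuous and semi-positive, so the classical Gillet--Soul\'e framework does not apply directly: one needs Moriwaki's extension in \cite[\S~3.2]{Moriwaki}, which defines arithmetic intersection numbers with continuous semi-positive metrics by smooth approximation and verifies that the pairing with principal arithmetic divisors still vanishes in the limit. Second, one must make sense of $\int_{\mathcal{B}(\C)}\log|u|\,\dd\mu_{\bar{\mathscr{L}}}$; this is handled by observing that $\mu_{\bar{\mathscr{L}}}$ is a Bedford--Taylor Monge--Amp\`ere measure which, as noted in the excerpt, does not charge the proper analytic subset $\mathrm{supp}(\mathrm{div}(u))$, combined with local $L^1$-integrability of $\log|u|$ against any smooth volume form on $\mathcal{B}(\C)$. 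Once those two points are granted, the identity drops out essentially as stated, and I would cite \cite{Moriwaki} for the full details.
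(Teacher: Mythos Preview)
The paper does not actually prove this lemma: it simply states it as a citation, writing ``Following~\cite[\S~3.2]{Moriwaki}, we have the following generalization of the product formula,'' and then states the lemma with no further argument. Your proposal is a correct sketch of the underlying argument from Moriwaki's work (principal arithmetic divisor paired with $(\bar{\mathscr{L}})^{\dim\mathcal{B}}$ via arithmetic intersection theory, extended to continuous semi-positive metrics by approximation), so you have in fact supplied more than the paper itself does, and your outline matches the cited source.
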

Note also that, for any $u\in K^\times$, the induced function $u:\mathcal{B}(\mathbb{C})\longrightarrow \mathbb{P}^1(\mathbb{C})$ is rational, whence it is holomorphic and non-vanishing on $U:=\mathcal{B}(\mathbb{C})\setminus \mathrm{supp}(\mathrm{div}(u))$, whence the function $b\in U\mapsto \log|u|_b=\log|u(b)|$ is pluriharmonic on $U$. 

\subsubsection{Generalized product formula and finite extensions}
A finite extension $L$ of the field $K$ has the same transcendence degree over $\mathbb{Q}$ as $K$. We follow here~\cite[\S~3B]{Vojta}. Let $\mathcal{B}'$ be the normalization of $\mathcal{B}$ in $L$ and let $p:\mathcal{B}'\to\mathcal{B}$ be the associated map. Then $p$ is a finite morphism of degree $[L:K]$ and $\mathscr{L}':=p^*\mathscr{L}$ is an ample line bundle on $\mathcal{B}'$ and we also have a product formula for the polarization $p^*\bar{\mathscr{L}}$ on $\mathcal{B}'$. In this case, if $v\in M_K$ and $w\in M_L$, we say that $w$ lies over $v$ and we also denote by $w|v$ if
\begin{itemize}
\item either $w$ and $v$ are both archimedean and correspond respectively to points $b'\in \mathcal{B}'(\C)^\mathrm{gen}$ and $b\in \mathcal{B}(\C)^{\mathrm{gen}}$ with $p(b')=b$,
\item or $w$ and $v$ are non-archimedean, corresponding to prime divisors $Y'$ on $\mathcal{B}'$ and $Y$ on $\mathcal{B}$ respectively, and $p(Y')=Y$.
\end{itemize}

\subsection{The proof of Theorem~\ref{tm:Gal-Fekete} over a number field} Of course, when $f$ is defined over a number field, it is also defined over a finitely generated field over $\mathbb{Q}$ containing it. Nevertheless, the number field is as a toy-model where the ideas are more transparent and we also want to keep track of the dependency on the field of the different constants. 

So, assume first $f$ is defined over a number field and let $F$ be the good lift of $f$ (i.e.~ $\mathrm{Res}(F)=1$). Then $F$ is defined over a number field $\mathbb{K}$ and let $a_d,\ldots,a_0,b_d,\ldots,b_0\in \mathbb{K}$ be its coefficients. Write $E=\{z_1,\ldots,z_N\}$ with $N:=\# E$. Up to replacing $\mathbb{L}:=\mathbb{K}(E)$ by a finite extension, we can assume it is a Galois extension. By assumption, the set $E$ is invariant under the action of the finite group $G=\mathrm{Gal}(\mathbb{L}/\mathbb{K})$.

As all points in $E$ are preperiodic under iteration of $f$ and as $E$ is finite, there are integers $n>m\geq0$ such that $f^{\circ n}(z_i)=f^{\circ m}(z_i)$ for all $1\leq i\leq N$. Up to replacing $\mathbb{L}$ by a finite extension, we thus can find lifts $Z_1,\ldots,Z_N\in \mathbb{L}^2\setminus\{(0,0)\}$ of $z_1,\ldots,z_N$ respectively such that $F^{\circ n}(Z_i)=F^{\circ m}(Z_i)$ for $1\leq i\leq N$. In particular, for any absolute $w\in M_\mathbb{L}$ and any $i$, we have $G_{F,w}(Z_i)=0$ and
\[\sum_{1\leq i\neq j\leq N}\Phi_{g_f,w}(z_i,z_j)=\sum_{1\leq i\neq j\leq N}\log|Z_i\wedge Z_j|_w=\log\left|\Delta_E\right|_w,\]
where $\Delta_E=\prod_{1\leq i\neq j\leq N}Z_i\wedge Z_j$. 
We now let $v_0\in M_\mathbb{K}$ be the archimedean place of $\mathbb{K}$ corresponding to the our initial choice of embedding $\mathbb{K}\hookrightarrow \C$. As $v_0$ is archimedean, for any $w\in M_{\mathbb{L}}$ with $w|v_0$, we have $[\C_w:\C_v]=[\C:\C]=1$. Since $E$ is $G$-invariant, the definition of $\Delta_E$ implies
\[|\sigma(\Delta_E)|_w=|\Delta_E|_w\]
for any $\sigma\in G$ and any $w\in M_\mathbb{L}$ with $w|v_0$.
By the product formula~\eqref{productformulanbfield2} , we have
\begin{align*}
-[\mathbb{L}:\mathbb{K}]\sum_{1\leq i\neq j\leq N}\Phi_{g_f}(z_i,z_j) & =
-\sum_{w_0|v_0}\log|\Delta|_{w_0}&\\
& =\sum_{w\in M_\mathbb{L} \ w \nmid v_0}[\mathbb{L}_w:\mathbb{K}_v]N_v\log|\Delta|_{w}\\
&= \sum_{w\in M_\mathbb{L} \ w \nmid v_0}[\mathbb{L}_w:\mathbb{K}_v]N_v\sum_{1\leq i\neq j\leq N}\Phi_{g_f,w}(z_i,z_j).
\end{align*}
We now use Lemma~\ref{lm:BakerHsia}. We get
\begin{align*}
-\sum_{1\leq i\neq j\leq N}\Phi_{g_f}(z_i,z_j) 
 &\leq \sum_{w\in M_\mathbb{L} \ w \nmid v_0}\frac{[\mathbb{L}_w:\mathbb{K}_v]}{[\mathbb{L}:\mathbb{K}]}N_v C_d\left(\epsilon_w+\log^+\max\{|a_i|_w,|b_j|_w\}\right)N\log(N)
\end{align*}
and this sum is finite, $\epsilon_w=0$ if $w$ is non archimedean and $\epsilon_w=1$ if $w$ is archimedean. As $a_0,\ldots,a_d,b_0,\ldots,b_d\in \mathbb{K}$, for a given $v\in M_\mathbb{K}$, we have
\[\sum_{w |  v}\frac{[\mathbb{L}_w:\mathbb{K}_v]}{[\mathbb{L}:\mathbb{K}]}N_v \log^+\max\{|a_i|_w,|b_j|_w\}=N_v\log^+\max\{|a_i|_v,|b_j|_v\}.\]
An easy computation also gives
\[\sum_{w\in M_\mathbb{L}^\infty \ w \nmid v_0}\frac{[\mathbb{L}_w:\mathbb{K}_v]}{[\mathbb{L}:\mathbb{K}]}N_v C_d\epsilon_w\leq \sum_{w\in M_\mathbb{L}^\infty }\frac{[\mathbb{L}_w:\mathbb{K}_v]}{[\mathbb{L}:\mathbb{K}]}N_v C_d\epsilon_w= C_d[\mathbb{K}:\mathbb{Q}].\]
The combination of the three last equations above gives
\begin{align*}
-\sum_{1\leq i\neq j\leq N}\Phi_{g_f}(z_i,z_j) 
 &\leq C_d[\mathbb{K}:\mathbb{Q}]\left(1+h_{\mathrm{nv}}([a_i:b_j])\right)N\log(N),
\end{align*}
where $h_{\mathrm{nv}}([a_i:b_j])= \sum_{v\in M_\mathbb{K}}\frac{N_v}{[\mathbb{K}:\mathbb{Q}]}\log^+\max\{|a_i|_v,|b_j|_v\}$ is the absolute naive height of $[a_i:b_j]$.
This is the expected inequality.

\subsection{The proof of Theorem~\ref{tm:Gal-Fekete} in the general case}
We now let $f$ be a degree $d$ rational map and we let $F$ be its good lift. As above, we let $a_d,\ldots,a_0,b_d,\ldots,b_0$ be its coefficients. Then, $f$ and $F$ are defined over $K:=\mathbb{Q}(a_d,\ldots,a_0,b_d,\ldots,b_0)$. We assume $K$ is not a number field so it is a finitely generated field over $\mathbb{Q}$ with $\mathrm{trdeg}(K/\mathbb{Q})\geq1$. By section~\ref{Sec:finitelygenerated}, there is  a normal projective variety $\mathcal{B}$ defined over $\mathrm{Spec}(\mathbb{Z})$ with function field $\mathbb{Q}(\mathcal{B})=K$. Let $\mathscr{L}$ be an ample line bundle on $\mathcal{B}$. We are going to use generalized product formulas on the field $K$ induced by hermitian continuous metrizations on the pair $(\mathcal{B},\mathscr{L})$ as in the Section~\ref{Sec:finitelygenerated}.
As described in \S~\ref{Sec:finitelygenerated}, the set $M_K$ is the disjoint union
\[M_K=\mathcal{B}^{\mathrm{gen}}(\mathbb{C}) \sqcup \bigsqcup_{\Gamma\in\mathcal{B}^{(1)}}\{|\cdot|_\Gamma\}\]
and for any $u\in K^\times$, the set $\{\Gamma\in\mathcal{B}^{(1)}\, ; |u|_\Gamma\neq1\}$ is a finite set. Note that the initial embedding $K\hookrightarrow \mathbb{C}$ corresponds to a point $b_0\in \mathcal{B}^{\mathrm{gen}}(\mathbb{C})$.

\medskip

As above, let $L$ be a finite Galois extension of $K$ such that $E\subset \p^1(L)$ and we can assume we have lifts $Z_1,\ldots,Z_N\in L^2$ of $z_1,\ldots,z_N$ respectively that are preperiodic under $F$. Let $p:\mathcal{B}'\to\mathcal{B}$ be the finite morphism induced by the field extension $L/K$ and $\mathscr{L}'=p^*\mathscr{L}$. Again, for any $w\in M_L$, one has
\[\sum_{1\leq i\neq j\leq N}\Phi_{g_f,w}(z_i,z_j)=\sum_{1\leq i\neq j\leq N}\log|Z_i\wedge Z_j|_w=\log\left|\Delta_E\right|_w,\]
where $\Delta_E=\prod_{1\leq i\neq j\leq N}Z_i\wedge Z_j$. Denote $G=\mathrm{Gal}(L/K)$, and let $b\in \mathcal{B}'(\C)^{\mathrm{gen}}$. As the set $E$ is $G$-invariant, the above gives
\[|\Delta_E(\sigma\cdot b)|=|\sigma(\Delta_E)|_{b}=|\Delta_E|_{b}=|\Delta_E(b)|\]
for any $\sigma\in G$. This implies $|\Delta_E(b)|=|u\circ p(b)|$ for all $b\in \mathcal{B}'(\C)^\mathrm{gen}$ for some rational function $u\in \mathbb{Q}(\mathcal{B})$. As $\deg(p)=[L:K]$, this gives in particular
\begin{align}
[L:K]\log|u(b_0)|=\sum_{p(b')=b_0}\log|\Delta_E|_{b'}=[L:K]\sum_{1\leq i\neq j\leq N}\Phi_{g_f}(z_i,z_j).\label{eq:reduce-to-b0}
\end{align}
Let $S:=\{\Gamma\in \mathcal{B}^{(1)}\, : \ \max\{|a_i|_\Gamma,|b_j|_\Gamma\}>1\}$ and $P:=\{p$ prime $; \ \Gamma\in S\cap \mathcal{B}^{(1)}_p\neq\varnothing\}$. Note that the sets $S$ and $P$ are finite and independent of $E$. By Lemma~\ref{lm:BakerHsia}, for any $\Gamma\in {\mathcal{B}'}^{(1)}$,
\[\log|\Delta_E|_\Gamma\leq C_d\log^+\max\{|a_i|_{p(\Gamma)},|b_j|_{p(\Gamma)}\}N\log(N)\]
for some constant $C_d$ depending only on $d$. In particular, $\log|\Delta_E|_\Gamma\leq0$ for all $\Gamma\notin p^{-1}(S)$.

\medskip

We now endow $\mathscr{L}$ with a continuous hermitian metric (which depends on $E$). We denote by $\bar{\mathscr{L}}$ the induced hermitian line bundle. We choose $\bar{\mathscr{L}}$ so that the induced measure $\mu_{\bar{\mathscr{L}}}$ is supported in $\mathcal{B}(\mathbb{C}))\setminus\mathrm{supp}(\mathrm{div}(u))$ and such that, in a local chart centered at $b_0$, the measure $\mu_{\bar{\mathscr{L}}}$ is exactly $\deg_\mathscr{L}(\mathcal{B})$ times the normalized Lebesgue measure on the Shilov boundary of a polydisk centered at $b_0$. We let $a_{\Gamma}$ be the coefficient induced by the hermitian line bundle $p^*\bar{\mathscr{L}}$ on $\mathcal{B}'$.

As $b\mapsto -\log|u(b)|$ is pluriharmonic on $\mathcal{B}(\mathbb{C})\setminus \mathrm{supp}(\mathrm{div}(u))$, the mean equality gives
\[-\log|u(b_0)| = -\frac{1}{\deg_{\mathscr{L}}(\mathcal{B})}\int_{\mathcal{B}(\mathbb{C})}\log|u(b)|\, \mathrm{d}\mu_{\bar{\mathscr{L}}}(b).\]
As $|\Delta_E|=|u\circ p|$ as functions on $\mathcal{B}'(\C)$ and as $\deg(p)\deg_{\mathscr{L}}(\mathcal{B})=\deg_{\mathscr{L}'}(\mathcal{B}')$, the projection formula gives
\[-\log|u(b_0)| = -\frac{1}{\deg_{\mathscr{L}'}(\mathcal{B}')}\int_{\mathcal{B}'(\mathbb{C})}\log|\Delta_E(b')|\, \mathrm{d}\mu_{p^*\bar{\mathscr{L}}}(b').\]
We now apply the generalized product formula given by Lemma~\ref{lm:generalized-productformula} to find
\begin{align*}
-\log|u(b_0)|
& =\sum_{\Gamma\in (\mathcal{B}')^{(1)}}\frac{a_{\Gamma'}}{\deg_{\mathscr{L}'}(\mathcal{B}')}\log|\Delta_E|_\Gamma\\
& \leq C_d\sum_{p(\Gamma)\in S}\frac{a_{\Gamma}}{\deg_{\mathscr{L}'}(\mathcal{B}')}\log^+\max\{|a_i|_{p(\Gamma)},|b_j|_{p(\Gamma)}\}\cdot N\log(N).
\end{align*}
We now use \eqref{eq:finiteplace}: for a given prime number $p$, $\sum_{\Gamma\in {\mathcal{B}'}^{(1)}_p}a_\Gamma\leq \deg_{\mathscr{L}'}(\mathcal{B}')$ and the above gives
\begin{align*}
-\log|u(b_0)|
& \leq C_d\sum_{p\in P}\sum_{\Gamma\in \mathcal{B}^{(1)}_p}\log^+\max\{|a_i|_{\Gamma},|b_j|_{\Gamma}\}\cdot N\log(N).
\end{align*}
Let $C:=C_d\sum_{p\in P}\sum_{\Gamma\in \mathcal{B}^{(1)}_p}\log^+\max\{|a_i|_{\Gamma},|b_j|_{\Gamma}\}<+\infty$. Together with \eqref{eq:reduce-to-b0}, this concludes the proof.

\subsection{Proof of Theorems~\ref{Main-theorem} and \ref{tm:Galois-quantitative}}
\begin{proof}[Proof of Theorem~\ref{tm:Galois-quantitative}] It is a direct consequence of Theorems~\ref{tm:speed-quasi-Fekete} and \ref{tm:Gal-Fekete} since Lipschitz observables are in $W^{1,2}$ with $\|\nabla \varphi\|_{L^2}\leq \mathrm{Lip}(\varphi)$.
\end{proof}

\begin{proof}[Proof of Theorem~\ref{Main-theorem}] 
By a classical interpolation argument, it is sufficient to prove the result for Lipschitz  observables.
Let $f:\p^1\to\p^1$ be a degree $d$ rational map.
To prove Theorem~\ref{Main-theorem}, it is sufficient to show that Theorem~\ref{tm:Galois-quantitative} applies to the set $\mathrm{Per}_n(f)$ and also that there is a constant $0<\alpha<1$ such that 
\[\alpha d^n\leq \# \mathrm{Per}_n(f)\leq d^n, \quad n\geq 2.\]
Let $F$ be the good lift of $f$ and $K$ be the field generated by its coefficients. Then $\mathrm{Per}_n(f)$ is the set of zeros of the dynatomic polynomial $\Psi_{n,f}$, which has coefficients in $K$. Whence $\mathrm{Per}_n(f)$ is a finite $\mathrm{Gal}(\bar{K}/K)$-invariant subset of $\p^1(\C)$. Theorem~\ref{tm:Gal-Fekete} thus applies to $\mathrm{Per}_n(f)$.

By definition, we have $\# \mathrm{Per}_n(f)\leq d^n$ and we know (\cite{lyubich:equi}) that the number of repelling periodic points of period exact $n$ is $\sim d^n$. This conclude the proof. 
\end{proof}
\begin{remark} \normalfont
	For the counting of $\# \mathrm{Per}_n(f)$, we could give a more quantitative bound using the combinatorial of petals of a parabolic periodic points to bound from above their number. This would rely on Fatou-Shishikura inequality which is not available in higher dimension whereas Lyubich result is by \cite{briendduval} or the improved version of \cite{DeThelin-Dinh-Kaufmann}. 
\end{remark}

\bibliographystyle{short}
\bibliography{biblio}
\end{document}